\newtheorem{Theo}[subsubsection]{Theorem}
\newtheorem{Theor}{Theorem}
\newtheorem{Theore}{Theorem}
\newtheorem*{Theorem}{Theorem}
\newtheorem{cor}[subsubsection]{Corollary}
\newtheorem{Conj}{Conjecture}
\newtheorem{Theorema}{Theorem}
\theoremstyle{definition}
\newtheorem{conj}[subsection]{Conjecture}
\newtheorem{Conje}{Conjecture}
\newtheorem{Coro}{Corollary}
\newtheorem{Corol}{Corollary}
\newtheorem{fact}[subsubsection]{Fact} 
\newtheorem{rem}[subsubsection]{Remark}
\newtheorem{question}[subsection]{Question}
\newtheorem{Prop}[subsubsection]{Proposition}
\newtheorem{Obs}[subsubsection]{Observation}
\newtheorem{example}[subsubsection]{Example}
\newtheorem{Convention}[subsubsection]{Convention}
\newtheorem{Lemm}[subsubsection]{Lemma} 
\newtheorem{definition}[subsubsection]{Definition}
\newtheorem{problem}[subsection]{Problem}
\newtheorem{ex}[subsection]{Exercise}
\DeclareSymbolFont{largesymbols}{OMX}{yhex}{m}{n}
\DeclareMathAccent{\widetilde}{\mathord}{largesymbols}{"65}
\newcommand{\bp}{\begin{Prop}}
\newcommand{\ep}{\end{Prop}}
\newcommand{\bl}{\begin{Lemm}}
\newcommand{\el}{\end{Lemm}}
\newcommand{\bex}{\begin{ex} \rm}
\newcommand{\eex}{\end{ex}}
\newcommand{\bt}{\begin{Theo}}
\newcommand{\et}{\end{Theo}}
\newcommand{\bq}{\begin{question}}
\newcommand{\eq}{\end{question}}
\newcommand{\bc}{\begin{cor}}
\newcommand{\ec}{\end{cor}}
\newcommand{\bob}{\begin{Obs}}
\newcommand{\eob}{\end{Obs}}
\newcommand{\nc}{\newcommand}
\nc{\renc}{\renewcommand}
\nc{\ssec}{\subsection}
\nc{\sssec}{\subsubsection} 
\nc\ol{\overline}
\nc\wt{\widetilde}
\nc\wh{\widehat}
\nc\tboxtimes{\wt{\boxtimes}}
\renc{\d}{{\delta}}
\nc{\Aa}{{\mathbb{A}}}
\nc{\Bb}{{\mathbb{B}}}
 \nc{\Gg}{{\mathbb{G}}}  
\nc{\Hh}{{\mathbb{H}}}
 \nc{\Nn}{{\mathbb{N}}}
\nc{\Pp}{{\mathbb{P}}}
\nc{\Rr}{{\mathbb{R}}}
\newcommand{\F}{\mathbb{F}}
\nc{\BV}{{\mathbb{V}}}
\nc{\BW}{{\mathbb{W}}}
\newcommand{\Z}{\mathbb{Z}}
\newcommand{\N}{\mathbb{N}}
\nc{\Qq}{{\mathbb{Q}}}
\nc{\Ss}{{\mathbb{S}}}
\nc{\Cc}{{\mathbb{C}}}
\nc{\Ff}{{\mathbb{F}}}
 \nc{\EL}{{L_\infty}}
\nc{\CA}{{\mathcal{A}}}
\nc{\CB}{{\mathcal{B}}}
\nc{\CE}{{\mathcal{E}}}
\nc{\CF}{{\mathcal{F}}}
\nc{\Las}{\mathsf{Las}}
\nc{\CG}{{\mathcal{G}}}
\nc{\CL}{{\mathcal{L}}}
\nc{\CC}{{\mathcal{C}}}
\nc{\CM}{{\mathcal{M}}}
\nc{\CN}{{\mathcal{N}}}
\nc{\Oog}{{\mathbb{O}}}
\nc{\Oo}{{\mathcal{O}}}
\nc{\CP}{{\mathcal{P}}}
\nc{\CQ}{{\mathcal{Q}}}
\nc{\CR}{{\mathcal{R}}}
\nc{\CS}{{\mathcal{S}}}
\nc{\CT}{{\mathcal{T}}}
\nc{\CU}{{\mathcal{P}}}
\nc{\CV}{{\mathcal{V}}}
\nc{\CW}{{\mathcal{W}}}
\nc{\CZ}{{\mathcal{Z}}}
\nc{\cM}{{\check{\mathcal M}}{}}
\nc{\csM}{{\check{\mathcal A}}{}}
\nc{\oM}{{\overset{\circ}{\mathcal M}}{}}
\nc{\obM}{{\overset{\circ}{\mathbf M}}{}}
\nc{\oCA}{{\overset{\circ}{\mathcal A}}{}}
\nc{\obA}{{\overset{\circ}{\mathbf A}}{}}
\nc{\ooM}{{\overset{\circ}{M}}{}}
\nc{\osM}{{\overset{\circ}{\mathsf M}}{}}
\nc{\vM}{{\overset{\bullet}{\mathcal M}}{}}
\nc{\nM}{{\underset{\bullet}{\mathcal M}}{}}
\nc{\oD}{{\overset{\circ}{\mathcal D}}{}}
\nc{\obD}{{\overset{\circ}{\mathbf D}}{}}
\nc{\oA}{{\overset{\circ}{\mathbb A}}{}}
\nc{\op}{{\overset{\bullet}{\mathbf p}}{}}
\nc{\cp}{{\overset{\circ}{\mathbf p}}{}}
\nc{\oU}{{\overset{\bullet}{\mathcal U}}{}}
\nc{\oZ}{{\overset{\circ}{\mathcal Z}}{}}
\nc{\ofZ}{{\overset{\circ}{\mathfrak Z}}{}}
\nc{\oF}{{\overset{\circ}{\fF}}}
\nc{\fa}{{\mathfrak{a}}}
\nc{\fb}{{\mathfrak{b}}}
\nc{\fg}{{\mathfrak{g}}}
\nc{\fgt}{{\fg}_!}
\nc{\fgl}{{\mathfrak{gl}}}
\nc{\fh}{{\mathfrak{h}}}
\nc{\fj}{{\mathfrak{j}}}
\nc{\fm}{{\mathfrak{m}}}
\nc{\ft}{{\mathfrak{t}}}
\nc{\fn}{{\mathfrak{n}}}
\nc{\fu}{{\mathfrak{u}}}
\nc{\fp}{{\mathfrak{p}}}
\nc{\fr}{{\mathfrak{r}}}
\nc{\fs}{{\mathfrak{s}}}
\nc{\fsl}{{\mathfrak{sl}}}
\nc{\hsl}{{\widehat{\mathfrak{sl}}}}
\nc{\hgl}{{\widehat{\mathfrak{gl}}}}
\nc{\hg}{{\widehat{\mathfrak{g}}}}
\nc{\chg}{{\widehat{\mathfrak{g}}}{}^\vee}
\nc{\hn}{{\widehat{\mathfrak{n}}}}
\nc{\chn}{{\widehat{\mathfrak{n}}}{}^\vee}
\nc{\fA}{{\mathfrak{A}}}
\nc{\fB}{{\mathfrak{B}}}
\nc{\fD}{{\mathfrak{D}}}
\nc{\fE}{{\mathfrak{E}}}
\nc{\fF}{{\mathfrak{F}}}
\nc{\fG}{{\mathfrak{G}}}
\nc{\fK}{{\mathfrak{K}}}
\nc{\fL}{{\mathfrak{L}}}
\nc{\fM}{{\mathfrak{M}}}
\nc{\fN}{{\mathfrak{N}}}
\nc{\fP}{{\mathfrak{P}}}
\nc{\fU}{{\mathfrak{U}}}
\nc{\fV}{{\mathfrak{V}}}
\nc{\fZ}{{\mathfrak{Z}}}
\newcommand{\Q}{\mathbb{Q}}
\nc{\bb}{{\mathbf{b}}}
\nc{\bd}{\partial}
\nc{\be}{{\mathbf{e}}}
\nc{\bj}{{\mathbf{j}}}
\nc{\bn}{{\mathbf{n}}}
\nc{\bF}{{\mathbf{F}}}
\nc{\bu}{{\mathbf{u}}}
\nc{\bv}{{\mathbf{v}}}
\nc{\bx}{{\mathbf{x}}}
\nc{\bs}{{\mathbf{s}}}
\nc{\by}{{\bar{y}}}
\nc{\bw}{{\mathbf{w}}}
\nc{\bA}{{\mathbf{A}}}
\nc{\bK}{{\mathbf{K}}}
\nc{\bI}{{\mathbf{I}}}
\nc{\bB}{{\mathbf{B}}}
\nc{\bG}{{\mathbf{G}}}
\nc{\bD}{{\mathbf{D}}}
\nc{\bP}{{\mathbf{P}}}
\nc{\bH}{{\mathbf{H}}}
\nc{\bM}{{\mathbf{M}}}
\nc{\bN}{{\mathbf{N}}}
\nc{\bV}{{\mathbf{V}}}
\nc{\bU}{{\mathbf{U}}}
\nc{\bL}{{\mathbf{L}}}
\nc{\bW}{{\mathbf{W}}}
\nc{\bX}{{\mathbf{X}}}
\nc{\bY}{{\mathbf{Y}}}
\nc{\bZ}{{\mathbf{Z}}}
\nc{\bS}{{\mathbf{S}}}
\nc{\bSi}{{\bar{\Sigma}}}
\nc{\sA}{{\mathsf{A}}}
\nc{\sB}{{\mathsf{B}}}
\nc{\sC}{{\mathsf{C}}}
\nc{\sD}{{\mathsf{D}}}
\nc{\sF}{{\mathsf{F}}}
\nc{\sG}{{\mathsf{G}}}
\nc{\sK}{{\mathsf{K}}}
\nc{\sM}{{\mathsf{M}}}
\nc{\sO}{{\mathsf{O}}}
\nc{\sQ}{{\mathsf{Q}}}
\nc{\sP}{{\mathsf{P}}}
\nc{\sZ}{{\mathsf{Z}}}
\nc{\sfp}{{\mathsf{p}}}
\nc{\sr}{{\mathsf{r}}}
\nc{\sg}{{\mathsf{g}}}
\nc{\sff}{{\mathsf{f}}}
\nc{\sfb}{{\mathsf{b}}}
\nc{\sfc}{{\mathsf{c}}}
\nc{\sd}{{\ltimes}} 
\nc{\tH}{{\widetilde{H}}}
\nc{\tA}{{\widetilde{\mathbf{A}}}}
\nc{\tB}{{\widetilde{\mathcal{B}}}}
\nc{\tg}{{\widetilde{\mathfrak{g}}}}
\nc{\tG}{{\widetilde{G}}}
\nc{\TM}{{\widetilde{\mathbb{M}}}{}}
\nc{\tO}{{\widetilde{\mathsf{O}}}{}}
\nc{\tU}{\widetilde{U}}
\nc{\TZ}{{\tilde{Z}}}
\nc{\tx}{{\tilde{x}}}
\nc{\tq}{{\tilde{q}}}
\nc{\tfP}{{\widetilde{\mathfrak{P}}}{}}
\nc{\tz}{{\tilde{\zeta}}}
\nc{\tmu}{{\tilde{\mu}}}
  \nc{\vol}{{\mathop{\operatorname{\rm vol\,}}}}
  \nc{\gal}{{\mathop{\operatorname{\rm Gal\,}}}}
  \nc{\cl}{{\mathop{\operatorname{\rm cl}}}}
  \nc{\disc}{{\mathop{\operatorname{\rm disc}}}}
  \nc{\Sym}{{\mathop{\operatorname{\rm Sym}}}}
   \nc{\Aut}{{\mathop{\operatorname{\rm Aut}}}}
 \nc{\Spec}{{\mathop{\operatorname{\rm Spec}}}}
  \nc{\spec}{{\mathop{\operatorname{\rm Spec}}}}
\nc{\Ker}{{\mathop{\operatorname{\rm Ker}}}}
 \nc{\dom}{{\mathop{\operatorname{\rm dom}}}}
\nc{\End}{{\mathop{\operatorname{\rm End}}}}
 \nc{\Hom}{\operatorname{\Hom}}
 \nc{\GL}{{\mathop{\operatorname{\rm GL}}}}
 \nc{\Id}{{\mathop{\operatorname{\rm Id}}}}
 \nc{\rk}{{\mathop{\operatorname{\rm rk}}}}
 \nc{\length}{{\mathop{\operatorname{\rm length}}}}
\nc{\supp}{{\mathop{\operatorname{\rm supp} \, }}}
\nc{\val}{{\rm val}}
\nc{\res}{{\mathop{\operatorname{\rm res}}}}
\def\Ind#1#2#3{{#1} {\downarrow}_{#3} {#2} }
\nc{\seq}[1]{\stackrel{#1}{\sim}}
\def\beq#1{\begin{equation} \label{ #1}}
\def\eeq{\end{equation}}
\def\prf{\begin{proof}}
\def\pv{\end{proof} }
 \def\eprf{\end{proof} }
 \renc{\b}{{\beta}}
\def\Ind#1#2{#1\setbox0=\hbox{$#1x$}\kern\wd0\hbox to 0pt{\hss$#1\mid$\hss}
\lower.9\ht0\hbox to 0pt{\hss$#1\smile$\hss}\kern\wd0}
\def\@setthanks{\vspace{-\baselineskip}\def\thanks##1{\@par##1\@addpunct.}\thankses}
\title{Diophantine problems over tamely ramified fields}
\author{Konstantinos Kartas}
\thanks{During this research, the author was funded by EPSRC grant EP/20998761 and was also supported by the Onassis Foundation - Scholarship ID: F ZP 020-1/2019-2020.}
\newcommand{\Addresses}{{
  \bigskip
  \footnotesize

\textsc{Mathematical Institute, Woodstock Road, Oxford OX2 6GG.}\par\nopagebreak
  \textit{E-mail address}: \texttt{kartas@maths.ox.ac.uk}
}}
\begin{document}


\maketitle
\begin{abstract}
Assuming a certain form of resolution of singularities, we prove a general \textit{existential} Ax-Kochen/Ershov principle for tamely ramified fields in all characteristics. This specializes to well-known results in residue characteristic $0$ and unramified mixed characteristic. It also encompasses the conditional existential decidability results known for $\F_p(\!(t)\!)$ and its finite extensions, due to Denef-Schoutens. On the other hand, it also applies to the setting of \textit{infinite} ramification, thereby providing us with an abundance of existentially decidable infinitely ramified extensions of $\Q_p$ and $\F_p(\!(t)\!)$. 


\end{abstract}
\setcounter{tocdepth}{1}
\tableofcontents

\section*{Introduction} 

The decidability of the $p$-adic numbers $\Q_p$, established by Ax-Kochen \cite{AK} and Ershov \cite{Ershov}, still remains one of the highlights of model theory. It motivated several decidability results both in mixed and positive characteristic: 
\begin{itemize}
\item In mixed characteristic, Kochen \cite{Kochen} showed that $\Q_p^{ur}$, the maximal unramified extension of $\Q_p$, is decidable. More generally, by work of \cite{Ziegler}, \cite{Ershov}, \cite{Bas2}, \cite{Bel} and more recently \cite{AJ}, \cite{Junguk} and \cite{Lee}, we have a good understanding of the model theory of unramified and finitely ramified mixed characteristic henselian fields. 
\item In positive characteristic, our understanding is much more limited. Nevertheless, by work of Denef-Schoutens \cite{Den}, we know that $\F_p(\!(t)\!)$ is \textit{existentially} decidable in $L_t=\{0,1,t,+,\cdot\}$, modulo resolution of singularities. In fact, Theorem 4.3 \cite{Den} applies to show that any finitely ramified extension of $\F_p(\!(t)\!)$ is existentially decidable relative to its residue field.
\end{itemize}
The situation is less clear for \textit{infinitely} ramified extensions of $\Q_p$ and $\F_p(\!(t)\!)$. Macintyre discusses two such interesting \textit{wildly ramified} extensions on pg.140 \cite{Mac}, namely $\Q_p(\zeta_{p^{\infty}})$ and $\Q_p^{ab}$. The author has shown in \cite{KK} that these fields are (existentially) decidable relative to the perfect hulls of $\F_p(\!(t)\!)$ and $\overline{\F}_p(\!(t)\!)$ respectively in the language $L_t$. Another interesting example is $\Q_p(p^{1/p^{\infty}})$, which is also (existentially) decidable relative to the perfect hull of $\F_p(\!(t)\!)$ in $L_t$. However, the main results of \cite{KK} do not say anything about whether any of these fields is actually decidable or even existentially decidable. 

In the present paper, we shift our attention from the wildly ramified setting to the tamely ramified setting. We address the case of \textit{tamely} ramified extensions of $\Q_p$ and $\F_p(\!(t)\!)$, modulo resolution of singularities, including also the \textit{infinitely} ramified ones. In particular, we obtain an abundance of explicit examples of \textit{infinitely} ramified extensions of $\Q_p$ and $\F_p(\!(t)\!)$ whose theory is existentially decidable. Before we state our results, let us first introduce the precise form of resolution of singularities that we will use:
\begin{Conj} [Log-Resolution] \label{R}
Let $X$ be a reduced, flat scheme of finite type over an excellent discrete valuation ring $R$. Then there exists a blow-up morphism $f:\tilde{X}\to X$ in a nowhere dense center $Z\subsetneq X$ such that 
\begin{enumerate}
\item $\tilde{X}$ is a regular scheme.
\item $\tilde{X}_s=\tilde{X}\times_{\Spec R} \Spec(R/\mathfrak{m}_R)$ is a strict normal crossings divisor.
\end{enumerate}
\end{Conj}
Some background material related to resolution of singularities is provided in \S \ref{rosection}. We obtain the following existential Ax-Kochen/Ershov principle, from which all applications will be deduced:
\begin{Theor} \label{mainA}
Assume Conjecture \ref{R}. Suppose $(K,v)$ and $(L,w)$ are henselian and tamely ramified over a valued field $(F,v_0)$ with $\Oo_F$ an excellent DVR. If $\text{RV}(K)\equiv_{\exists,RV(F)} \text{RV}(L)$, then $K\equiv_{\exists,F} L$ in $L_{\text{rings}}$.
\end{Theor}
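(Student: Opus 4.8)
The plan is to recast Theorem~\ref{mainA} as a statement about $F$-rational points on $F$-varieties and then transport such points from $K$ to $L$ through the leading-term sorts, using Conjecture~\ref{R} to reduce to a geometrically tractable situation. Since $K\equiv_{\exists,F}L$ is symmetric and the hypothesis $RV(K)\equiv_{\exists,RV(F)}RV(L)$ is symmetric as well, it suffices to show that every existential $L_r$-sentence with parameters in $F$ that holds in $K$ also holds in $L$; swapping $K$ and $L$ then gives the converse. After the usual reductions (distributing $\exists$ over disjunctions and absorbing inequations into an auxiliary variable), such a sentence asserts that a fixed affine $F$-variety $V$ has an $F$-rational point, so one wants $V(K)\neq\emptyset\Rightarrow V(L)\neq\emptyset$. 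Replace $L$ by a sufficiently saturated elementary extension $L^{*}$: it is still henselian, and since $RV(L)\preceq RV(L^{*})$ the relation $RV(K)\equiv_{\exists,RV(F)}RV(L^{*})$ still holds, so it is enough to produce a point of $V(L^{*})$. Fix $\bar a\in V(K)$, let $F(\bar a)\subseteq K$ be the subfield it generates over $F$ with the restricted valuation $v$; it then suffices to construct an $F$-embedding of valued fields $(F(\bar a),v)\hookrightarrow(L^{*},w)$, since the coordinates of $\bar a$ will map to a point of $V(L^{*})$.

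To build that embedding I would first use Conjecture~\ref{R} --- concretely, local uniformization along $v$, which it implies --- to put $(F(\bar a),v)/(F,v_0)$ into standard form: choosing a projective $\Oo_F$-model of $F(\bar a)$ and resolving, one gets a regular model on which $v$ has a regular centre $\xi$; reading off a regular system of parameters there, and using that $(F,v_0)$ is discretely valued and defectless and that $F(\bar a)/F$ lies inside the tame extension $K/F$, one sees that $(F(\bar a),v)/(F,v_0)$ is a finite tower of \emph{elementary tame steps} --- residue-transcendental, value-transcendental, separable residue-algebraic, and value-algebraic of degree prime to the residue characteristic --- followed by a henselization, and that finitely many leading-term classes $r_1,\dots,r_n\in RV(F(\bar a))$ (attached to the parameters) generate it over $RV(F)$. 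Resolution is essential precisely here: over a singular centre one cannot read off the valuation data, and outside the tame case what one reads off could not be recovered from $RV$ alone.

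Since $RV(F(\bar a))$ is finitely generated over $RV(F)$ (its value group and residue field are finitely generated over those of $F$), its existential diagram over $RV(F)$ is consistent with the theory of $RV(L^{*})$ over $RV(F)$, hence realized there by saturation; this gives an $RV(F)$-embedding $RV(F(\bar a))\hookrightarrow RV(L^{*})$. I would then lift this to the sought $F$-embedding $(F(\bar a),v)\hookrightarrow(L^{*},w)$ by climbing the tower above one elementary step at a time: at a transcendental step choose a witness in $L^{*}$ of the prescribed leading-term class, using saturation to find a value outside the current group, respectively a residue transcendental over the current residue field; at a separable residue-algebraic step, or a value-algebraic step of degree prime to the residue characteristic, use henselianity of $L^{*}$ together with the transported leading-term datum; and extend over the henselization, again by henselianity of $L^{*}$. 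Restricting $F(\bar a)\hookrightarrow L^{*}$ to the coordinates of $\bar a$ yields the desired point of $V(L^{*})$, hence $V(L)\neq\emptyset$.

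The crux of the argument, and the step I expect to be hardest, is the interaction between the uniformization step and the lifting step: one must establish a relative, $RV$-flavoured analogue of Kuhlmann's structure theory for tame valued fields, to the effect that for tame extensions of a defectless discrete valued field the invariant ``$RV$ over $RV(F)$'' determines the valued field strongly enough for the step-by-step lifting to go through, and that after uniformization every such extension visibly splits into the elementary tame steps listed above with no defect, inseparability or wild ramification left over. Making the treatment of the henselization and of the prime-to-$p$ algebraic parts uniform, and checking that each elementary step is genuinely detected by and rebuildable from the classes $r_1,\dots,r_n$, is exactly where the hypotheses ``tamely ramified'' and ``$(F,v_0)$ defectless discrete'' are used in force.
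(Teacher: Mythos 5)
Your strategy is genuinely different from the paper's: you reduce the theorem to an embedding lemma (embed the finitely generated valued subfield $(F(\bar a),v)$ into a saturated $L^*\succeq L$ over $F$), whereas the paper never constructs embeddings at all. It specializes the given $\Oo_K$-point to a closed point of a log-resolved model with normal crossings, encodes liftability of that closed point as a single existential condition in $RV$ over $RV(F)$ (the RV-Hensel Lemma, Proposition \ref{RV-HENSEL}), transfers that condition from $RV(K)$ to $RV(L)$, and handles the blow-up center by induction on dimension; the only embedding argument is the relative-dimension-zero base case (Proposition \ref{basecase}), i.e.\ finite tame extensions, where Hensel's Lemma and prime-to-$p$ roots of uniformizers suffice.

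The problem is that your route has a genuine gap precisely at the step you yourself flag as hardest, and it is not a removable technicality. The claim that local uniformization lets you ``read off'' from a regular system of parameters at the center a decomposition of $(F(\bar a),v)/(F,v_0)$ into finitely many elementary tame steps, generated over $RV(F)$ by finitely many leading-term classes, is unjustified and false as stated: a single regular model with normal crossings records the valuation only to finite precision and does not determine the valued field $F(\bar a)$, whose value group and residue field need not even be finitely generated over those of $F$. More seriously, your list of elementary steps omits \emph{immediate transcendental} extensions, which do occur under the hypotheses of Theorem \ref{mainA} and are invisible to $RV$: take $F=\F_p(t)$ with $v_t$ (defectless, discrete), $K=\F_p((t))$ (henselian and tamely ramified over $F$ in the paper's sense), and $a\in\F_p((t))$ transcendental over $F$; then $F(a)/F$ is immediate, so $RV(F(a))=RV(F)$ exactly as in Lemma \ref{RV-lemma}, the transported RV-data carries no information about $a$, and no resolution of a model of $F(a)$ splits this extension into residue-transcendental, value-transcendental, or prime-to-$p$ algebraic steps. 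Embedding such pieces into $L^*$ requires realizing pseudo-Cauchy/approximation types and controlling algebraic-type limits, i.e.\ a Kaplansky--Kuhlmann-type embedding theorem for extensions of a defectless discrete base relative to $RV$ --- a statement essentially of the same strength as the theorem you are proving. Deferring it to ``a relative, RV-flavoured analogue of Kuhlmann's structure theory'' leaves the central step unproven, and Conjecture \ref{R} as you invoke it does not supply it; this is exactly the difficulty the paper's point-transfer argument is designed to avoid.
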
 
The RV-structure associated to a valued field is a certain object which integrates the value group and the residue field in a single structure. All necessary background on such structures is provided in \S \ref{rvsec}. The language for RV-structures uses the group language $\{1, \cdot\}$ together with a constant symbol for $\infty$, a ternary predicate for (multi-valued) addition and a binary predicate for the relation $va\leq vb$. We recall this formalism in detail in Definition \ref{rvlangdef}. It should be noted that our notion of a "tamely ramified extension" used in Theorem \ref{mainA} is \textit{not} restricted to algebraic extensions but does indeed specialize to the ordinary notion in the case of an algebraic extension (see \S \ref{transtame}). This level of generality is essential for applications, even if one is merely interested in algebraic tamely ramified algebraic extensions of $\Q_p$ or $\F_p(\!(t)\!)$ (see Remark \ref{transisessential}). The excellence condition on the discrete valuation ring $\Oo_F$ simply amounts to $\widehat{F}/F$ being separable (see \S \ref{rosection}). For example, this is the case for  the valued fields $(\Q(t),v_t)$, $(\Q,v_p)$ or $(\F_p(t),v_t)$.

The key ingredient in the proof of Theorem \ref{mainA} is a certain form of Hensel's Lemma, which we call RV-Hensel's Lemma. This is completely parallel to the classical geometric version of Hensel's Lemma for smooth morphisms over henselian local rings, saying that the existence of an integral point is equivalent to the existence of a solution in the residue field. In general, the best one can hope for is not smoothness but rather strict normal crossings. This necessitates a variant of Hensel's Lemma for morphisms having strict normal crossings. However, to ensure the existence of an integral point, one must now require not just a solution in the residue field but rather an RV-solution, i.e., more residual information is needed. Most crucially, this only works when one of the multiplicities of the irreducible components of the special fiber is not divisible by $p$, the residue characteristic of the base ring. This condition plays well with tame ramification (as one might guess) which is why the scope of our results is restricted to that setting.

Theorem \ref{mainA} specializes to well-known Ax-Kochen/Ershov results in residue characteristic $0$ and in the mixed characteristic \textit{unramified} setting. Moreover, these Ax-Kochen/Ershov principles hold not only for the existential theories but also for the full first-order theories. The case of \textit{finite} tame ramification in mixed characteristic and with perfect residue fields was proved recently in Corollary 5.9 \cite{Junguk} (see also Remark \ref{jungukrem}). 

At the same time, Theorem \ref{mainA} implies a conditional existential decidability result for $\F_p(\!(t)\!)$, which was already known by the work of Denef-Schoutens \cite{Den}. Our proof does not use Greenberg's approximation theorem, which is an important ingredient in \cite{Den}. In the case of $\F_p(\!(t)\!)$, the proof can be simplified significantly, as we explain in \S \ref{Denef-Schoutensec}. Although Conjecture \ref{R} is a more refined version of resolution than Conjecture 1 \cite{Den}, the simpler proof explained in \S \ref{Denef-Schoutensec} uses only Conjecture 1 \cite{Den}. In fact, there is a slight variation of the proof which only depends on \textit{local uniformization}, a local (weaker) form of resolution of singularities. This was stated (and briefly sketched) in the first manuscript version of the present paper (see \texttt{arXiv:2103.14646v1 [math.AG]}). The details are now included in the author's PhD thesis (see \S 5.6 \cite{ThesisKartas}). Recent work by Anscombe-Dittmann-Fehm \cite{AnsDittFehm} has further improved upon this. Their proof depends on an even weaker assumption and uses entirely different techniques from the ones used here or in \cite{Den}.

The main emphasis of the present paper is to go beyond the setting of discretely valued fields and obtain some information about the existential theories of \textit{infinitely} ramified fields. We highlight the following application regarding the maximal tamely ramified extension $\Q_p^{tr}$ (resp. $\F_p(\!(t)\!)^{tr}$) of $\Q_p$ (resp. $\F_p(\!(t)\!)$), which is of arithmetic significance: 
\begin{Coro} \label{maincor}
Assume Conjecture \ref{R}. Then the field $\Q_p^{tr}$ (resp. $\F_p(\!(t)\!)^{tr}$) is existentially decidable in the language of rings (resp. $L_t$).
\end{Coro} 
Ramification theory provides us with nice explicit descriptions for the fields of Corollary \ref{maincor} (see Fact \ref{expl}). 
More generally, we deduce Corollary \ref{dec1}, which can be used to generate plenty of other existentially decidable infinitely ramified fields, both in mixed and positive characteristic (assuming Conjecture \ref{R}). For instance, if $\ell\neq p$ is a prime, we get that $\Q_p(p^{1/\ell^{\infty}})$ (resp. $\F_p(\!(t)\!)(t^{1/\ell^{\infty}})$) is existentially decidable in the language of rings (resp. $L_t$). We note that the case of $\Q_p(p^{1/p^{\infty}})$ (resp. $\F_p(\!(t)\!)^{1/p^{\infty}}$), which was mentioned before, does not appear to be amenable to the methods presented here and probably requires some new ideas.

Apart from decidability consequences, certain new existential Ax-Kochen/Ershov phenomena become accessible (see \S \ref{tweakabh}). For example, if $\ell$ is a prime different from $p$, one can deduce (modulo Conjecture \ref{R}) that $\F_p(\!(t)\!)(t^{1/\ell^{\infty}}) \preceq_1 \F_p(\!(t^{\Gamma_{\ell}})\!)$, the latter being the Hahn series field with value group $\Gamma_{\ell}=\frac{1}{\ell^{\infty}}\Z$ and residue field $\F_p$. This should be contrasted with the classical  example by Abhyankar, namely the Artin-Schreier equation $x^p-x-1/t=0$. This has a solution in the Hahn field $\F_p(\!(t^{\Gamma_p})\!)$ with value group $\Gamma_p=\frac{1}{p^{\infty}}\Z$, namely $x=\sum_{n=1}^{\infty} t^{-1/p^n}$, but has no solution in $\F_p(\!(t)\!)^{1/p^\infty}$. Similarly, we have that $\Q_p(p^{1/\ell^{\infty}})$ is existentially closed in any of its maximal immediate extensions.


\section*{Notation}
\begin{itemize}

\item Let $X$ be a scheme over a discrete valuation ring $R$ with residue field $\kappa$ and fraction field $K$. Let $s$ be the closed point of $\Spec R$ and $\eta$ be its generic point. We denote by
$X_s$ the special fiber $X\times_{\Spec R} \Spec (\kappa)$ and by $X_K$ the generic fiber $X\times_{\Spec R} \Spec K$. 
\item Given an $R$-algebra $A$, we denote by $X(A)$ the set of $A$-integral points, i.e., the set of morphisms $\Spec(A)\to X$ over $\Spec R$. If $X=\Spec (B)$, where $B$ is a finitely generated $R$-algebra of the form $B=R[x_1,...,x_n]/(f_1,...,f_m)$, this can be identified with the set of tuples $(a_1,...,a_n)\in A^n$ such that $f_1(a_1,...,a_n)=...=f_m(a_1,...,a_n)=0$. 
\item If $A$ is an integral domain with $\text{Frac}(A)=L$ and is flat as an $R$-module, we also speak of the underlying $L$-rational point of an $A$-integral point $P: \Spec(A)\to X$, which is simply the induced morphism $\Spec(L)\to X_K$. 
\item If $(K,v)$ is a valued field, we denote by $\mathcal{O}_K$ the valuation ring, $\Gamma$ the value group and $k$ the residue field. 

\item Given a language $L$, an $L$-structure $M$ and $A\subseteq M$, we write $L(A)$ for the language $L$ enriched with constant symbols for the elements of $A$. We write $\text{Diag}_M(A)$ for the atomic diagram of $A$ in $M$.

\item Given $L$-structures $M,N$ with a common substructure $A$, we use the notation 
$M\equiv_{\exists,A} N$ in $L$ to mean that the structures $M$ and $N$ are existentially elementary equivalent in the language $L(A)$.

\item We also introduce the following notation:\\
$L_{\text{rings}}$: The language of rings, i.e. $\{0,1,+,\cdot\}$.\\
$L_t$: The language $L_{\text{rings}}$, together with a constant symbol $t$, whose intended interpretation will always be clear from the context. \\
$L_{\text{oag}}$: The language of ordered abelian groups, i.e. $\{0,+,<\}$.\\
$L_{\text{val}}$: The language of valued fields, construed as a three-sorted language with sorts for the valued field, the value group, the residue field and symbols for the valuation and residue maps. \\
$L_{\text{AKE}}$: The Ax-Kochen/Ershov language; this is the language $L_{\text{val}}$ together with a function symbol $s$ for a cross-section of the valuation, i.e., a right inverse of $v:K^{\times}\to \Gamma$.

\end{itemize}
\section{Preliminaries from geometry } \label{prelims}
Since our approach is purely geometric, following the philosophy of \cite{Den} and \cite{Den2}, we shall review some basic concepts and facts from algebraic geometry. Let $R$ be a DVR with uniformizer $\pi$ and $S=\Spec R$. The basic examples to have in mind throughout the paper are $R=\Z_{(p)}$ or $\Z_p$ with $\pi=p$ and $R=\F_p[t]_{(t)}$ or $\F_p[\![t]\!]$ with $\pi=t$. 

\subsection{Regular schemes}

\begin{definition}
Let $(A,\mathfrak{m})$ be a Noetherian local ring with residue field $k=A/\mathfrak{m}$. We say that $A$ is regular if $\dim_k \mathfrak{m}/\mathfrak{m}^2=\dim A$.
\end{definition}
The concept of a "non-singular" scheme is formalized in the following:
\begin{definition}
Let $X$ be a locally Noetherian scheme.\\
$(a)$ We say that $X$ is regular at $x\in X$, or that $x$ is a regular point of $X$, if $\mathcal{O}_{X,x}$ is a regular local ring. \\
$(b)$ We say that $X$ is regular if it is regular at all $x\in X$.
\end{definition}
If $X$ is regular at $x\in X$, then a minimal set of generators for $\mathfrak{m}_{X,x}$ is said to be a \textit{regular system} of parameters of $X$ at $x$. 


\subsection{Normal Crossings}
\subsubsection{Definition}
A divisor $D$ is said to be \textit{strict normal crossings} if Zariski locally $D_{\text{red}}$ is the union of non-singular hypersurfaces crossing transversely. More formally:
\begin{definition} \label{defnc}
$(a)$ Let $X$ be a locally Noetherian scheme and $D$ be an effective Cartier divisor on $X$. We say that $D$ has \textit{strict} normal crossings at a point $x\in X$ if $X$ is regular at $x$ and there exists a regular system of parameters $f_1,...,f_n$ of $X$ at $x$, an integer $0\leq m \leq n$ and integers $e_1,...,e_m\geq 1$ such that $D$ is cut out by $f_1^{e_1}\cdot f_2^{e_2} \cdot ... \cdot f_m^{e_m}$ in $\mathcal{O}_{X,x}$. If $D$ has strict normal crossings at all points $x\in X$, then $D$ is a strict normal crossings divisor.\\
$(b)$ Let $X$ be a regular scheme and $X\to \Spec R$ be a morphism to a discrete valuation ring $R$. We say that $X\to \Spec R$ has strict normal crossings at $x\in X$ if $X_s=X\times_{\Spec R} \Spec( R/\mathfrak{m}_R)$ has strict normal crossings at $x$. If $X\to \Spec R$ has strict normal crossings at all $x\in X$, we simply say that $X\to \Spec R$ has strict normal crossings.
\end{definition}
The integers $e_i$ in Definition \ref{defnc}$(a)$ are the \textit{multiplicities} of the irreducible components of $D$ passing through $x$. Some examples are given below:
\begin{example} \label{ncdex}
$(a)$ Let $R=\F_p[\![t]\!]$ and $X$ be the affine $R$-scheme defined by $x^2y-t=0$. Then $X\to \Spec R$ has strict normal crossings at $(x,y,t)$. The irreducible components of $X_s=\Spec(\F_p[x,y]/(x^2y) )$ have multiplicities $2$ and $1$.  \\
$(b)$ Let $R=\Z_{(p)}$ and $X$ be the affine $R$-scheme defined by $x^py-p=0$. Then $X\to \Spec R$ has strict normal crossings at $(x,y,p)$. The irreducible components of $X_s=\Spec(\F_p[x,y]/(x^py) )$ have multiplicities $p$ and $1$.
\end{example}

\subsubsection{Normal crossings vs strict normal crossings} \label{ncvssnc}
Since the term "normal crossings" is used with subtly different meanings throughout the literature, we find it useful to clarify a few things. In addition to Definition \ref{defnc}, one has the following more general notion:
\begin{definition} \label{ncdef}
If $X$ is a regular scheme and $D$ is an effective Cartier divisor on $X$, we say that $D$ has normal crossings if there exists an \'etale morphism $\pi:Z\to X$ such that the pullback $\pi^*D$ has \textit{strict} normal crossings. 
\end{definition}
However, sometimes authors use the term "normal crossings" for Definition \ref{defnc} rather than the more general Definition \ref{ncdef} (see Remark 9.1.7). Thanks to the following well-known fact, this distinction is not going to be important for our purposes:
\begin{rem} [see Proposition 2.2.2 \cite{Nic}, \cite{Con}] \label{remnc}
If $D$ has normal crossings, one can always find a blow-up $f:\tilde{X}\to X$ so that $f^*D$ is a strict normal crossings divisor. 
\end{rem}
Remark \ref{remnc} is proved in Proposition 2.2.2 \cite{Nic} when $\dim X=2$ but the general case is similar. It is explained in detail in \cite{Con}. Note that in Conrad's terminology a strict normal crossings divisor is taken to be reduced (see also the last paragraph of \S \ref{ncvssnc}). Nevertheless, the proof of \cite{Con} applies equally well in the non-reduced context. We illustrate Remark \ref{remnc} with an example: 
\begin{example} \label{examplesncd}
Let $p\neq 2$, $R=\F_p[\![t]\!]$ and $X$ be the affine $R$-scheme defined by
$$x^2-\alpha y^2=t$$
where $\alpha \in \F_p^{\times}-(\F_p^{\times})^2$. Let $\beta\in \overline{ \F}_p$ be such that $\beta^2=\alpha$. After a base change $S'\to S$, corresponding to $R\to \F_p(\beta)[\![t]\!]$,  the pullback of the divisor $X_s$ has defining equation 
$$(x-\beta y)(x+\beta y)=0$$
and is thus a normal crossings divisor of $X_{S'}=X\times_S S'$. Blowing up the ideal $(x,y,t)$ makes $X_s$ into a strict normal crossings divisor, in accordance with Fact \ref{remnc}.
\end{example}
We finally warn the reader that in several places in the literature (e.g. \cite{Con} and 2.2.2 \cite{Tem}), the term strict normal crossings divisor means that the divisor is \textit{reduced}. However, our Definition \ref{defnc} allows non-reduced divisors as well. In fact, the reader should have the non-reduced case in mind as the typical case throughout the paper. 
\subsection{Resolution of singularities} \label{rosection}
\subsubsection{Motivation}
For the rest of the paper, unless otherwise stated, we will assume a certain form of resolution of singularities. Recall that a \textit{resolution of singularities} of a scheme $X$ is a proper and birational morphism $f:X'\to X$, with $X'$ regular (Definition 8.3.39 \cite{Liu}). In practice, one often requires more refined forms of resolution. Before we state precisely the form that we will need, let us first discuss the notion of an excellent DVR. 
\subsubsection{Excellent DVR}
The concept of a quasi-excellent ring first appeared in \S 7.9 \cite{EGA}. Grothendieck showed in 7.9.5 \cite{EGA} that if $R$ is a ring such that every integral scheme of finite type over $R$ admits a resolution of singularities, then $R$ is quasi-excellent. 

We shall not dwell on the precise definition of quasi-excellence, which is rather technical. For the rest of the paper, our base ring $R$ will always be a DVR, in which case $R$ is \textit{excellent} precisely when the field extension $\widehat{K}/K$ is \textit{separable}, where $K=\text{Frac}(R)$ (see Corollary 8.2.40$(b)$ \cite{Liu}). The reader may take this as the definition of an excellent DVR, although this is a consequence of the actual definition. We record the following construction, due to F. K. Schmidt, which will appear again in Remark \ref{counternondef}. It is a classical example of a non-excellent DVR:
\begin{example} [see Examples 2.3.5 \cite{Tem}, 3.1 \cite{defect}] \label{defex}
Let $y=\sum_{i=0}^{\infty} \alpha_i x^i \in \F_p[\![x^p]\!]$ be transcendental over $\F_p(x)$ and let $F=\F_p(x,y)$. Consider the embedding $F\hookrightarrow \F_p(\!(x)\!)$ and the restriction of the $x$-adic valuation on $F$. We have that $\widehat{F}=\F_p(\!(x)\!)$. It is clear that $\widehat{F}/F$ is not separable as $y^{1/p}\in \widehat{F}-F$.
\end{example} 
\subsubsection{Resolution of singularities}
Grothendieck conjectured that the converse of 7.9.5 \cite{EGA} is also true, i.e., every integral quasi-excellent scheme admits a resolution of singularities (see 7.9.6 \cite{EGA}). In practice, one often asks for stronger variants of resolution, some of which are discussed in \S 2.5 \cite{Tem}. These variants are mostly discussed in the characteristic $0$ context, where we have proven results.  However, in 2.3.2 \cite{Tem} Temkin explains that the stronger variants discussed in \S 2.5 are expected to be true for \textit{general} quasi-excellent schemes. In this paper, we will assume the validity of the following statement:
\begin{Conj} [Log-Resolution] \label{R}
Let $X$ be a reduced, flat scheme of finite type over an excellent discrete valuation ring $R$. Then there exists a blow-up morphism $f:\tilde{X}\to X$ in a nowhere dense center $Z\subsetneq X$ such that 
\begin{enumerate}
\item $\tilde{X}$ is a regular scheme.
\item $\tilde{X}_s=\tilde{X}\times_{\Spec R} \Spec(R/\mathfrak{m}_R)$ is a strict normal crossings divisor.
\end{enumerate}
\end{Conj}
Condition (1) is the one predicted by Grothendieck's conjecture. A desingularization of $X$ which also satisfies condition (2) is often called a desingularization of the pair $(X,X_s)$ (see 2.5.3 \cite{Tem})\footnote{In the terminology of 2.5.3 \cite{Tem} a desingularization of $(X,X_s)$ is only required to make $X_s$ into a strict normal crossings divisor but this is not really important in view of Fact \ref{remnc}. We warn the reader that in \cite{Tem} the term "monomial divisor" is used for what we called "normal crossings divisor" in Definition \ref{ncdef}. In Temkin's terminology a normal crossings divisor is required to be reduced (see 2.2.2 \cite{Tem}). Note also that 2.5.3 \cite{Tem} asks for some control over the centers of the blow-ups but this is not going to be important for us.} or a log-resolution of $(X,X_s)$. We emphasize that $X_s$ will  typically be non-reduced. In residue characteristic $0$, one can also make $X_s$ reduced, at the cost of replacing $R$ with a finite extension (see 3.1.4 \cite{Tem}).

\subsubsection{Evidence for Conjecture \ref{R}}
We refer to \S 8.3.4 \cite{Liu}, \S 3 \cite{Tem} for more information on established desingularization results. See also \S 3.6-\S 3.8 \cite{Spivak} for a more up-to-date survey.
\begin{fact} [Residue characteristic $0$] \label{Hiro}
Conjecture \ref{R} is known when the residue field of $R$ is of characteristic $0$ by work of Hironaka (see Main Theorem I, pg. 132 and Corollary 3, pg.146 \cite{Hir}). These results are phrased for varieties over fields of characteristic $0$ but on pg. 151 \cite{Hir} Hironaka explains that similar results hold more generally over quasi-excellent local rings of residue characteristic $0$. This is also explained in the introduction of \cite{Tem2}. The notion of "quasi-excellence" does not actually appear in Hironaka's paper but was later introduced by Grothendieck. 

To obtain Conjecture \ref{R} in residue characteristic $0$, one splits the desingularization of the pair $(X,X_s)$ into an ordinary desingularization $X'\to X$ and an \textit{embedded} desingularization of $X'_s \subsetneq X'$. Theorem 1.1 \cite{Tem2} proves a more general result for \textit{general} quasi-excellent schemes. We note that the above cited results are stated for integral (rather than reduced) schemes but this is not very important (see Remark 2.3.8 \cite{Tem2}).   
\end{fact}

In positive characteristic, Conjecture \ref{R} and other variants of resolution are widely open. We nevertheless have some partial results:
\begin{fact} [Positive characteristic]
$(a)$ First, in a series of papers Abhyankar proved resolution of singularties for the case of varieties of dimension at most $3$, over an algebraically closed field $k$ of characteristic $p>5$ (see \cite{Abh}). Cutkovsky gave a simplified and self-contained version of Abhyankar's proof in \cite{Cut}. In \cite{CosPil} and \cite{CosPil2}, Cossart and Piltant removed the restriction on the characteristic and generalized it for base fields $k$ satisfying a very mild assumption (namely that $[k:k^p]<\infty$). \\
$(b)$ Lipman showed that reduced, excellent, Noetherian schemes of dimension 2 can be desingularized, but his result does not include a divisor condition (see Theorem 8.3.44 \cite{Liu}). More recently, Cossart and Piltant \cite{CosPil3} proved a strong desingularization result for general separated quasi-excellent schemes of dimension at most $3$. Conjecture \ref{R} for $\dim(X)=3$ can be deduced directly from Theorem 1.1 \cite{CosPil3}, at least when $X$ is separated. In fact, we will only need Conjecture \ref{R} in the case where $X$ is affine (hence separated). 
\end{fact}

\begin{fact}
A.J. de Jong proved a \textit{weaker} statement than Conjecture \ref{R} with alterations, in the case where $R$ is a complete DVR (see Theorem 6.5 \cite{DJ}).
\end{fact}
For a concrete and detailed calculation of a desingularization of $X=\Spec(R[x,y]/(xy-a))$ with $a\in R$, which also makes $X_s$ into a strict normal crossings divisor, see Example 8.3.53 \cite{Liu}.

\section{Valued fields}
\subsection{RV-structures} \label{rvsec}
We now provide an overview of the RV-structures associated to a valued field $(K,v)$. 
\subsubsection{History}
The RV-structures (also known as Krasner's hyperfields due to \cite{Krasner}) first appeared in a model-theoretic context in the work of Scanlon \cite{Scanlon} (by the name \textit{leading terms}) and were studied by his student Flenner in \cite{Flenner}. These structures are simplified versions of Kuhlmann's earlier \textit{amc-congruences}\footnote{See the discussion on pg. 6 \cite{Flenner} for a comparison between the two formalisms. 
}
(see \cite{KuhlQE}), which in turn were inspired by Basarab's foundational work in \cite{Bas}. Basarab \cite{Bas} introduced his \textit{mixed structures} to obtain relative quantifier elimination and relative completeness results for \textit{general} henselian valued fields of characteristic $0$ (see Theorems A, B \cite{Bas}). However, his results require an \textit{infinite} family of such residual structures, which in practice makes it difficult to use for decidability purposes.\subsubsection{Definition} \label{dfn}
Let $(K,v)$ be a valued field with residue field $k$ and value group $\Gamma$. Consider the following short exact sequence of abelian groups
$$ 0\to k^{\times} \xrightarrow{\iota} K^{\times}/(1+\mathfrak{m}) \xrightarrow{v} \Gamma\to 0$$
where $\iota(a)$ is the equivalence class in $K^{\times}/(1+\mathfrak{m})$ of any lift $\tilde{a}\in \Oo_K$ of $a$. We introduce the structure $\text{RV}(K^{\times})=K^{\times}/(1+\mathfrak{m})$, and write $\text{rv}:K^{\times}\to K^{\times}/(1+\mathfrak{m})$ for the natural map that sends $a\in K^{\times}$ to its equivalence class in $K^{\times}/(1+\mathfrak{m})$. 
As with the value group, it will be convenient to include an element $\infty$ in $\text{RV}(K^{\times})$, resulting in $\text{RV}(K)=\text{RV}(K^{\times})\cup \{\infty\}$. We naturally extend $\text{rv}:K^{\times}\to\text{RV}(K^{\times})$ to $\text{rv}:K\to \text{RV}(K)$ by requiring that $\text{rv}(0)=\infty$. 

Apart from its natural multiplicative structure, inherited from $K^{\times}$, one can also equip $\text{RV}(K)$ with additional structure. 
\begin{itemize}
\item We have a ternary relation $\oplus$ for (multi-valued) addition among elements of $\text{RV}(K)$, so that $\oplus(a,b,c)$ holds if there are $x,y,z \in K$ such that $\text{rv}(x)=a,\text{rv}(y)=b,\text{rv}(z)=c$ and $x+y=z$. We shall simply write $a+b=c$ to mean that $c$ is \textit{unique} such that $\oplus(a,b,c)$. 
\item We equip $\text{RV}(K)$ with a binary relation $R(a,b)\iff vx\leq vy$, where $\text{rv}(x)=a$ and $\text{rv}(y)=b$. 
\end{itemize}
Since $\text{rv}(x)=\text{rv}(y)\implies vx=vy$, the relation $R$ is well-defined. It will be harmless and also convenient to write $va\leq vb$ in place of $R(a,b)$. The "higher" RV-structures described in \S 2 \cite{Flenner} will not be important for us.
\subsubsection{Language} 
\begin{definition} \label{rvlangdef}
The language $L_{\text{RV}}$ is a two-sorted language having the following sorts and symbols:\\
$(1)$ a VF-sort, which uses the language of rings $L_{\text{rings}}=\{+,-,\cdot,0,1\}$.\\
$(2)$ an RV-sort, which uses the group language $\{1, \cdot\}$, a constant symbol for $\infty$,
a ternary predicate for $\oplus$ and a binary predicate intended for the relation $va\leq vb$ (see \ref{dfn}). \\
$(3)$ a function symbol $\text{rv}:\text{VF}\to \text{RV}$ for the natural map $\text{rv}:K \to \text{RV}(K)$. Recall from \ref{dfn} that by convention $\text{rv}(0)=\infty$.
\end{definition}


\bl \label{RV-lemma}
Let $(F,v)$ be a valued field. Then the natural inclusion map $\text{RV}(F)\hookrightarrow \text{RV}(F^h)$ (resp. $\text{RV}(F)\hookrightarrow \text{RV}(\widehat{F})$) is an isomorphism.
\el 
\begin{proof}
Since $(F^h,v)/(F,v)$ is immediate, we have the commutative diagram below, consisting of two short exact sequences
\[
  \begin{tikzcd}
   0 \arrow[r] & \kappa^{\times} \arrow[d,"\cong"] \arrow[r] & \text{RV}(F^{\times}) \arrow[d,hook] \arrow[r] &  \Gamma \arrow[d,"\cong"] \arrow[r] & 0\\
0 \arrow[r] &  \kappa^{\times} \arrow[r] & \text{RV}((F^{h})^{\times}) \arrow[r] & \Gamma \arrow[r] & 0
     \end{tikzcd}
\]
By the Five Lemma, it follows that the natural inclusion $\text{RV}(F^{\times})\hookrightarrow \text{RV}((F^h)^{\times})$ is an isomorphism of abelian groups. Finally, note that this isomorphism respects the additional structure. The same proof applies verbatim for the completion $(\widehat{F},v)$.
\end{proof}
\subsubsection{Cross-section} \label{cross-section}
A right inverse $s:\Gamma \to K^{\times}$ of $v:K^{\times} \to \Gamma$ is called a cross-section and makes the exact sequence of \ref{dfn} split. Therefore, if such a cross-section exists, it allows us to identify the abelian group $\text{RV}(K^{\times})$ with $k^{\times} \times \Gamma$. Explicitly, we identify $(a,\gamma)\in k^{\times}\times \Gamma$ with $\iota(a)\cdot s(\gamma)\in \text{RV}(K^{\times})$. 

We add to $k^{\times} \times \Gamma$ an additional symbol $\infty$ and equip $(k^{\times} \times \Gamma) \cup \{\infty\}$ with the following structure: 
\begin{itemize}
\item A multiplication $\cdot$, which restricts to the natural group operation on $k^{\times}\times \Gamma$ and satisfies $\infty \cdot (a,\gamma)=(a,\gamma)\cdot \infty=\infty \cdot \infty =\infty$ for all $a\in k^{\times},\gamma \in \Gamma$. 
\item A ternary relation $\oplus$ for (multi-valued) addition, defined by $\oplus ( (a,\gamma),(b,\delta), (c,\epsilon))$ whenever (1) $\gamma<\delta$ and $c=a, \epsilon=\gamma$ or (2) $\delta<\gamma$ and $c=b,\epsilon=\delta$ or (3) $\gamma=\delta, a+b\neq 0$ and $c=a+b,\epsilon=\gamma$ or (4) $\gamma=\delta, a+b=0$ and $\epsilon>\gamma$ .
\item A binary relation $R((a,\gamma),(b,\delta))\iff \gamma\leq \delta$. 
\end{itemize}

In the presence of a cross-section, the structure $(\text{RV}(K),\oplus,\cdot, \leq ,1,\infty)$ described in \ref{dfn} is isomorphic to the structure $((k^{\times}\times \Gamma)\cup \{\infty\}, \oplus,\cdot,\leq,1,\infty)$ via the identification described above. 

\subsubsection{Interpretability} \label{multres}
In Fact \ref{flennerfact}, we record the observation that $k$ and $\Gamma$ are \textit{quantifier-free} interpretable in $\text{RV}(K)$. We first introduce some notation and make some preliminary remarks, which will also be useful later on: 

Consider a valuation ring $\Oo_K$ with fraction field $K$. We consider the set $\text{RV}(\Oo_K)=\{x\in \text{RV}(K):vx\geq 0\}$ and $\text{RV}(\Oo_K^{\times})=\{x\in \text{RV}(K):vx= 0\}$, equipped with the induced structure from $\text{RV}(K)$ and readily seen to be quantifier-free interpretable in the latter. We note that in \cite{Den2} the structure $\text{RV}(\Oo_K)$ is denoted by $\text{MR}(\Oo_K)$ and is called the structure of multiplicative residues of $\Oo_K$. 

The residue map $\text{res}: \text{RV}(\Oo_K)\to k$, mapping the equivalence class of $a\in \Oo_K$ in $\text{RV}(\Oo_K)$ to $\overline{a}$ (and $\infty$ to $0$), induces a multiplicative isomorphism $\text{RV}(\Oo_K^{\times}) \cong k^{\times}$, which is also a left inverse of $\iota:k^{\times}\to \text{RV}(\Oo_K^{\times})$. The isomorphism $\text{res}:\text{RV}(\Oo_K^{\times}) \xrightarrow{\cong} k^{\times}$ extends to $\text{res}:\text{RV}(\Oo_K^{\times})\cup \{\infty\} \xrightarrow{\cong} k$, by sending $\infty \mapsto 0$. It also respects addition in the following sense: For all for all $a,b,c\in \text{RV}(\Oo_K^{\times})\cup \{\infty\}$, we have that $\oplus(a,b,c)$ if and only if $\text{res}(a)+\text{res}(b)=\text{res}(c)$. Moreover, if $\text{res}(c)\neq 0$, then $c\in \text{RV}(K)$ is unique such that $\oplus(a,b,c)$ and we may simply write $a+b=c$ (see \S \ref{dfn}).

\begin{fact} [cf. Proposition 2.8 \cite{Flenner}] \label{flennerfact}
The residue field $(k,+,\cdot,0,1)$ and the value group $(\Gamma,+,<,0)$ are quantifier-free interpretable in $\text{RV}(K)$ with the structure described in Definition \ref{rvlangdef}(2). Moreover, the interpretations are uniform in $K$.
\end{fact}
\begin{proof}
We start with the residue field. We identify $(k,+,\cdot,0,1)$ with $\text{RV}(\Oo_K^{\times}) \cup \{\infty\}= \{ x\in \text{RV}(K): v(x)=0\} \cup \{\infty\}$, which is quantifier-free definable in $\text{RV}(K)$ by the formula $vx=v1\lor x=\infty$ (formally $vx=v1$ stands for $vx\leq v1\land v1\leq vx$). The identification is given by $\text{res}: \text{RV}(\Oo_K^{\times})\cup \{\infty\} \to k$ as described above. 

We next argue that the value group is quantifier-free interpretable in $\text{RV}(K)$. We introduce the following quantifier-free definable relation $\sim$ on $\text{RV}(K)$: $x\sim y \iff vx=vy$. The value group is identified with the quotient $\text{RV}(K)/\sim$, addition in $\Gamma$ is recovered by multiplication, the order $<$ is recovered by $vx<vy$ (viz. $vx\leq vy\land \lnot vy\leq vx$) and $0\in \Gamma$ is identified with $\infty \in \text{RV}(K)$.
\end{proof}
\section{Tamely ramified extensions} \label{tameprel}
For valued fields $(K,v)$ and $(L,w)$, we shall abbreviate the residue fields by $k$ and $l$ respectively, and the value groups by $\Gamma$ and $\Delta$ respectively. Although the notion of a tamely ramified extension is usually defined within the context of algebraic extensions, it will be essential for us to have an analogue for general (possibly transcendental) valued field extensions. Such a definition is given by Endler on pg. 180 \cite{Endler} and we follow his treatment.
\subsection{Algebraic tamely ramified extensions}
We first review the usual notion of an \textit{algebraic} tamely ramified extension: 
\begin{definition} \label{tamelyramdfn}
$(a)$ A finite valued field extension $(L,w)/(K,v)$ is said to be tamely ramified if $l/k$ is separable and $p\nmid [\Delta:\Gamma]$, where $p=\text{char}(k)$.\\
$(b)$ An algebraic valued field extension $(L,w)/(K,v)$ is said to be tamely ramified if every finite subextension is tamely ramified.
\end{definition}

\begin{example}
$(a)$ The extension $(\Q_p(p^{1/n}),v_p)/(\Q_p,v_p)$ is tamely ramified if and only if $p\nmid n$.\\
$(b)$ Any unramified extension of $\Q_p$ or $\F_p(\!(t)\!)$ is automatically tamely ramified.\\
$(c)$ If $\ell$ is a prime different than $p$, then $\F_p(\!(t)\!)(t^{1/\ell^{\infty}})$ is tamely ramified over $\F_p(\!(t)\!)$.
\end{example}
One can give an explicit description of the \textit{maximal} tamely ramified algebraic extension of $\Q_p$ and $\F_p(\!(t)\!)$, denoted by $\Q_p^{tr}$ and $\F_p(\!(t)\!)^{tr}$ respectively:
\begin{fact} [Corollary 1, pg. 32 \cite{CF}] \label{expl}
We have that:\\
$(a)$ $\Q_p^{tr}=\Q_p^{ur}(\{p^{1/n}:(p,n)=1\})=\Q_p(\{\zeta_n,p^{1/n}:(p,n)=1\})$.\\
$(b)$ $\F_p(\!(t)\!)^{tr}=\F_p(\!(t)\!)^{ur}(\{t^{1/n}:(p,n)=1\})=  \F_{p}(\!(t)\!)(\{\zeta_n,t^{1/n}:(p,n)=1\})$.
\end{fact}
We stress that the notion of a tamely ramified extension defined above is subtly different from the notion of a tame extension in \cite{Kuhl2}. The difference is that tamely ramified extensions are not required to be defectless. In many cases of interest, the base field is defectless (e.g., a local field). For instance, this applies in the treatment of tame ramification given in \cite{Lang}, \cite{CF} and \cite{Ser}. This is also the case for us here (see Fact \ref{excelimpliesdef}). Over such a defectless base, the notions "tamely ramified" and "tame" coincide. We favor the use of tamely ramified rather than tame extensions because the former readily extends in the transcendental context and will be sufficient for our purposes.
\subsection{Transcendental tamely ramified extensions} \label{transtame}
We now extend the notion of a tamely ramified field extension to the context of transcendental valued field extensions. The need for this level of generality is explained in Remark \ref{transisessential}. First, recall that an extension of fields $l/k$ (not necessarily algebraic) is said to be \textit{separable} if every finitely generated subextension $l_1/k$ has a \textit{separating transcendence basis}, i.e., a transcendence basis $T\subseteq l_1$ such that the extension $l_1/k(T)$ is separable algebraic. 
\begin{definition} \label{gentamedef}
A valued field extension $(L,w)/(K,v)$ is said to be tamely ramified if $l/k$ is separable, the quotient group $\Delta/\Gamma$ is $p$-torsion-free, where $p=\text{char}(k)$.
\end{definition}
Note that Definition \ref{gentamedef} specializes to Definition \ref{tamelyramdfn} in the case of an algebraic extension.
\begin{example} \label{exampletame}
$(a)$ Every valued field extension is tamely ramified when the residue characteristic is zero.\\
$(b)$ The valued field extension $(\Q_p(p^{1/n}),v_p)/(\Q,v_p)$ is tamely ramified if and only if $p\nmid n$. \\
$(c)$ Let $\F_p(\!(t^{\Gamma})\!)$ be the Hahn series field with residue field $\F_p$ and value group $\Gamma$. Suppose $1\in \Gamma$ is a distinguished positive element and write $t$ for $t^1$. The valued field extension $(\F_p(\!(t^{\Gamma})\!),v_t)/(\F_p(t),v_t)$ is tamely ramified if and only if $1$ is not $p$-divisible in $\Gamma$.
\end{example}
For the rest of the paper, unless otherwise stated, a valued field extension is said to be tamely ramified if it is tamely ramified in the generalized sense of Definition \ref{gentamedef}.

\subsection{Some comments on the defect} \label{defectcomment}

\subsubsection{Definition}
First let us recall some basic definitions. See \S 1 \cite{KuhlmannStab} for a more detailed overview. Given a valued field $(K,v)$ and a finite field extension $L/K$ of degree $n$, let $w_1,...,w_r$ be the extensions of $v$ to $L$. We then have the fundamental inequality 
$$n\geq \sum_{i=1}^r e_i\cdot f_i $$
where $e_i=e(w_i/v)$ and $f_i=f(w_i/v)$ are the ramification and inertia degrees associated to $w_i/v$. We say that the extension $L/K$ is \textit{defectless} if equality holds. We also say that $(K,v)$ is \textit{defectless} if any finite extension as above is defectless.
\subsubsection{Excellent vs Defectless}
Let $(F,v)$ be a valued field with valuation ring $\Oo_F$. We say that $(F,v)$ is \textit{separably defectless} if any finite separable extension $F_1/F$ is defectless. Likewise $(F,v)$ is \textit{inseparably defectless} if any finite purely inseparable extension $F_1/F$ is defectless. 
\begin{fact} [Theorem 3.3.5 \cite{engprest}] \label{discvalsepdef}
Any discrete valued field is separably defectless.
\end{fact}
However, when $\text{char}(F)>0$, the valued field need not be inseparably defectless: 
\begin{example}
Let $(F,v)$ be as in Example \ref{defex}. We see that $F(y^{1/p})/F$ is an inseparable immediate extension. In particular, it is not defectless.
\end{example}
Nevertheless, in the context of Theorem \ref{mainA}, our base ring $\Oo_F$ will in fact be an \textit{excellent} DVR. In this case, we record the following fact: 
\begin{fact} \label{excelimpliesdef}
Let $(F,v)$ be a discrete valued field. The following are equivalent: 
\begin{enumerate}
\item $(F,v)$ is defectless. 
\item $\Oo_F$ is excellent.
\end{enumerate}
\end{fact} 
\begin{proof}
By Corollaire 7.6.6 \cite{EGA}, $\Oo_F$ is excellent if and only if it is a Japanese ring, which is equivalent to $(F,v)$ being defectless by  Th\'eor\`eme 2, pg. 143 \cite{Bourbaki}. 
\end{proof}

\section{RV-Hensel's Lemma}
\subsection{Hensel's Lemma}
\subsubsection{Motivation} 
Let $X\to \Spec R$ be a \textit{smooth} morphism, where $R$ is a henselian local ring $R$ with residue field $\kappa$. The \textit{classical} geometric version of Hensel's Lemma allows us to lift $\kappa$-rational points of $X_s$ to $R$-integral points of $X$ (see e.g. Corollary 6.2.13 \cite{Liu}). Proposition \ref{RV-HENSEL} is an analogue of this, for the case where $X\to \Spec R$ is not necessarily smooth but has \textit{strict normal crossings}. In that case, one may lift RV-points of $X$ to integral points of $X$, at least when one of the multiplicities of the irreducible components of $X_s$ is not $p$-divisible, where $p=\text{char}(\kappa)$. For lack of a suitable reference, we shall spell out the details.
\subsubsection{Hensel's Lemma for smooth morphisms}
Recall the geometric version of Hensel's Lemma for (locally) smooth morphisms: 
\bl [Hensel's Lemma for smooth morphisms] \label{henetale}
Let $\pi:X\to Y$ be a morphism of schemes over a henselian local ring $R$ with residue field $\kappa$. Let $P\in Y(R)$ and suppose that the induced $\kappa$-rational point $y\in Y_s(\kappa)$ lifts to $ x\in X_s(\kappa)$ and suppose that $\pi$ is smooth at $x$. Then $P$ lifts to an $R$-integral point of $X$, which specializes to $x$. 
\el 
\begin{proof}
Let $S=\Spec R$ and $s$ be its unique closed point. Consider the fiber product
\[
  \begin{tikzcd}
   Z \arrow[d] \arrow[r] & X \arrow[d,"\pi"]\\
   S  \arrow[r] & Y
     \end{tikzcd}
\]
where $Z=X\times_Y S$ and $S\to Y$ is the morphism corresponding to the point $P\in Y(R)$. Note that $Z\to S$ is locally smooth by base change. Working now as in Corollary 6.2.13 \cite{Liu}, we get an $R$-integral point of $X$ lifting $x$.
\end{proof}
\begin{rem}[Remark 6.2.14 \cite{Liu}]
Corollary 6.2.13 \cite{Liu} is stated with a completeness assumption but henselianity is enough (cf. Proposition 2.3.5 \cite{Bosch}).
\end{rem}

\subsection{RV-Hensel's Lemma} \label{rvhensec}
Throughout this section, the ring $R$ will be a DVR with uniformizer $\pi$ and residue field $\kappa=R/\pi R$. We set $p=\text{char}(\kappa)$. We also have a \textit{henselian} valuation ring $A$ extending $R$ (typically non-discrete). We let $\mathfrak{m}_A$ be the maximal ideal and $k=A/\mathfrak{m}_A$ be the residue field of $A$.  Suppose $X$ is a scheme over $R$ and $x\in X_s$. Let $\overline{x}\in X_s(k)$ be a $k$-rational point above $x$. This means that we may identify the residue field $\kappa(x)=\Oo_{X,x}/\mathfrak{m}_{X,x}$ with a subfield of $k$. Given $a \in \Oo_{X,x}$, we write $\overline{a}$ for the image of $a$ inside $k$ via this identification. Finally, recall the map $\iota:k^{\times} \to \text{RV}(K^{\times})$ from \S \ref{dfn}. 

Proposition \ref{RV-HENSEL}---which we call RV-Hensel's Lemma---is the key ingredient in the proof of Theorem \ref{mainA}. The proof of RV-Hensel's lemma goes via an \'etale local analysis of strict normal crossings around a point in the special fiber, eventually reducing to Lemma \ref{simplcase}. 
The case $l=0$ in Lemma \ref{simplcase} simply means that there are no $T_i$'s and gives the \'etale local picture around a closed point of the special fiber. The presence of the $T_i$'s is precisely to account for the non-closed case. 
\bl  \label{simplcase}
Let $0\leq n\leq m$ and $l \geq 0$ be integers. Let $\overline{F}(T)\in \kappa[T_1,...,T_l][T]$ be separable and irreducible over $\kappa(T_1,...,T_l)$ and $F(T)\in R[T_1,...,T_l][T]$ be any lift. Consider the affine scheme 
$$Y=\Spec(R[X_1,...,X_m,T_1,...,T_l,Z]/( Z\cdot X_1^{e_1}\cdot...\cdot X_n^{e_n}-\pi, F(Z) ))$$ 
where $e_i\in \Z^{>0}$ for $i=1,...,n$ and $p\nmid e_1$. Let $y\in Y_s$ be the point corresponding to the prime ideal $(\pi,X_1,...,X_m)$. Let $\overline{y}\in Y_s(k)$ be a $k$-rational point above $y$ and suppose that there exist $a_1,...,a_n\in \mathfrak{m}_A$ such that $\iota(\overline{Z})\cdot \text{rv}( a_1^{e_1}\cdot...\cdot a_n^{e_n})= \text{rv}(\pi) $. Then there exists $P\in Y(A)$ specializing to $\overline{y}$.
\el 
\begin{proof}
The point $\overline{y}\in Y_s(k)$ corresponds to a morphism of $R$-algebras 
$$R[X_1,...,X_m,T_1,...,T_l,Z]/( Z\cdot X_1^{e_1}\cdot...\cdot X_n^{e_n}-\pi, F(Z)) \to k$$
with kernel $(\pi,X_1,...,X_m)$. Write $t_i=\overline{T_i}$ for the image of $T_i$ in $k$ and note that the $t_i$'s are algebraically independent. Let $\tilde{t}_i\in A$ be an arbitrary lift of $t_i$ and $\tilde{f}(T)$ be the polynomial $F(T)$ where we replace $T_i$ with $\tilde{t}_i$. Likewise, the reduction $f(T)$ of $\tilde{f}(T)$ modulo $\mathfrak{m}_A$ is the same as $\overline{F}(T)$ with $T_i$ replaced by $t_i$. It is also separable since $\kappa(t_1,...,t_l)\cong \kappa(T_1,...,T_l)$ and $f(\overline{Z})=0$.
Hensel's lemma allows us to choose $u \in A^{\times}$ so that $\tilde{f}(u)=0$ and $\overline{u}=\overline{Z}$. By assumption, there exist $a_1,...,a_n\in A$ such that $u\cdot a_1^{e_1}\cdot...\cdot a_n^{e_n} = \pi \cdot  \varepsilon$, with $\varepsilon\in 1+\mathfrak{m}$. Since $p\nmid e_1$, Hensel's lemma provides us with $\alpha\in A^{\times}$ such that $\alpha^{e_1}=\varepsilon$. Replacing $a_1$ with $a_1\cdot \alpha$ gives us an $A$-integral point $P\in Y(A)$ corresponding to a morphism of $R$-algebras
$$R[X_1,...,X_m,T_1,...,T_l,Z]/( Z\cdot X_1^{e_1}\cdot...\cdot X_n^{e_n}-\pi, F(Z) )\to A$$
mapping $X_i\mapsto a_i$, $T_i\mapsto \tilde{t}_i$ and $Z\mapsto u$. One readily checks that $P$ specializes to $\overline{y}$.
\end{proof}
Recall the definition of strict normal crossings in Definition \ref{defnc}$(b)$. 
\bp [RV-Hensel's Lemma]  \label{RV-HENSEL}
Let $X\to \Spec R$ be a finite type morphism having strict normal crossings at $x\in X_s$. Write $\pi =h \cdot x_1^{e_1}\cdot ...\cdot x_n^{e_n}$ in $\Oo_{X,x}$, where $e_i \in \Z^{>0}$, $h\in \Oo_{X,x}^{\times}$ and $\{x_1,...,x_n\}$ is part of a regular system of parameters of $X$ at $x$. We further assume that $\kappa(x)/\kappa$ is separable and $p\nmid e_1$.  Suppose there exists $\overline{x}\in X_s(k)$ above $x$ and $a_1,...,a_n \in \mathfrak{m}_A$ such that $\iota(\overline{ h})\cdot \text{rv}(a_1^{e_1}\cdot ...\cdot a_n^{e_n})=\text{rv}( \pi)$ in $\text{RV}(A)$. Then there exists $P\in X(A)$ specializing to $\overline{x}$. 
\ep  
\begin{proof}
Shrinking $X$ around $x$, if necessary, we may assume that $X=\Spec (B)$. We may also assume that the $x_i$'s are regular functions on all of $X$. Let $\mathfrak{p}\subsetneq B$ be the prime ideal corresponding to $x$. Since $\kappa(x)/\kappa$ is finitely generated and separable, we may choose a separating transcendence basis, say $\{t_1,...,t_l\}$ (or $\emptyset$ in case $x\in X_s$ is closed). Let $f(T)\in \kappa[t_1,...,t_l][T]$ be irreducible and separable over $\kappa(t_1,...,t_l)$ with $f(\overline{ h})=0$. Let $\{\tilde{t}_1,...,\tilde{t}_l\}$ be a set of lifts in $B_{\mathfrak{p}}$. By shrinking $X$ further, one may even assume that $\tilde{t}_1,...,\tilde{t}_l\in B$. Let $\tilde{f}(T)\in R[\tilde{t}_1,...,\tilde{t}_l][T]$ be a lift of $f(T)$. \\
\textbf{Claim:} There is a common \'etale neighborhood $(U,u)$ of $(X,x)$ and $(Y,y)$, where $(Y,y)$ is as in Lemma \ref{simplcase}.  Moreover, there exists $\overline{u} \in U_s(k)$ above $u$.
\begin{proof}
We pass to an \'etale neighborhood $(U,u)$ of $(X,x)$, where $U=\Spec (C)$ with $C=B[z]/(\tilde{f}(z))$ and $u$ is the point of $U_s$ corresponding to $\mathfrak{q}=(\mathfrak{p},z-h)$. To verify the \'etaleness of $(U,u)\to (X,x)$ at $u$, observe that $\tilde{f}'(z) \notin \mathfrak{q}$. Indeed, otherwise we would have $f'(\overline{h})=0$, which is contrary to the assumption that $f(T)$ is separable. Finally, note that $(U,u)\to (X,x)$ is residually trivial, i.e., $\kappa(u)=\kappa(x)$. By Tag 02GU(8) \cite{sp}, we see that $(U,u)\to (X,x)$ is \'etale at $u$. In particular, we get that $U$ is regular at $u$ by Tag 025N \cite{sp}. 

We introduce $Y=\Spec(R[X_1,...,X_m,T_1,...,T_l,Z]/( Z\cdot X_1^{e_1}\cdot...\cdot X_n^{e_n}-\pi, F(Z) ))$, where $F(T)\in R[T_1,...,T_l][T]$ is the same as $\tilde{f}(T)$ but with $\tilde{t}_i$ replaced with $T_i$. Consider the morphism $U \to Y$, which corresponds contravariantly to the ring homomorphism sending $X_i\mapsto x_i$, $T_i\mapsto \tilde{t}_i$ and $Z\mapsto z$. Note that $Y$ is regular at $y\in Y_s$ associated to the prime ideal $(\pi,X_1,...,X_m)$ and that $U\to Y$ maps $u$ to $y$. 

We claim that $(U,u)\to (Y,y)$ is \'etale at $u$. First, note that $\{X_1,...,X_m\}$ is a regular system of parameters of $Y$ at $y$. This maps via $\Oo_{Y,y}\to \Oo_{U,u}$ to $\{x_1,x_2,...,x_m\}$, which is a regular system of parameters for $U$ at $u$. We also need to argue that $\kappa(u)/\kappa(y)$ is finite separable. Indeed, we have a tower of finite extensions $\kappa(u)/\kappa(y)/\kappa(t_1,...,t_l)$ with $\kappa(u)/\kappa(t_1,...,t_l)$ being finite separable. It follows that $\kappa(u)/\kappa(y)$ is also finite separable.  \'Etaleness of $(U,u)\to (Y,y)$ may now be verified using Lemma 2.1.4 \cite{Nic}. Finally, the point $\overline{x}\in X_s(k)$ together with the fact that $(U,u)\to (X,x)$ is residually trivial at $u$, furnish us with a rational point $\overline{u}\in U_s(k)$ above $u$.
\qedhere $_{\textit{Claim}}$ \end{proof}
We summarize the above in the following diagram
\[
\begin{tikzcd}
 & (U,u) \arrow{dr}{\text{\'et}} \arrow[swap]{dl}{\text{\'et}} \\
 (Y,y)  && (X,x) 
\end{tikzcd}
\]
Note that there is $\overline{y}\in Y_s(k)$ above $y$, corresponding to the morphism 
$$\kappa[X_1,...,X_m,T_1,...,T_l,Z]/( Z\cdot X_1^{e_1}\cdot...\cdot X_n^{e_n}, F(Z) ) \to k $$ 
with $X_i \mapsto 0, T_i\mapsto t_i$ and $Z\mapsto \overline{h}$ (i.e., $\overline{Z}=\overline{h}$). Using our assumption, we may find $a_1,...,a_n \in \mathfrak{m}_A$ such that $\iota(\overline{Z}) \cdot \text{rv}( a_1^{e_1}\cdot ...\cdot a_n^{e_n})=\iota(\overline{h}) \cdot \text{rv}( a_1^{e_1}\cdot ...\cdot a_n^{e_n})= \text{rv}(\pi) $ in $\text{RV}(A)$. Lemma \ref{simplcase} implies the existence of an $A$-integral point of $Y$ specializing to $\overline{y}$. Consider the morphism $U_A\to Y_A$, induced from $U\to Y$ by base change. The morphism $U_A\to Y_A$ is \'etale at $u_A$ by base change. By Lemma \ref{henetale}, we get an $A$-integral point of $U$ specializing to $\overline{u}$. Finally, the latter induces $P\in X(A)$ specializing to $\overline{x}$.
\end{proof}
The assumption that one of the multiplicities not be $p$-divisible is necessary: 
\begin{example}
Let $R=\F_p[\![t]\!]$ and $A=\F_p[\![t^{1/p}]\!]$. Consider 
$$X=\Spec(R[x]/((1+x)\cdot x^p-t)$$ 
Let $P$ be the point at the origin of the special fiber, corresponding to the maximal ideal $(x,t)$. Although, $P\in X_s(\F_p)$ and $a=t^{1/p}$ has the property that $\text{rv}((1+a)\cdot a^p)=\text{rv}(t)$, one sees that $X$ has no $A$-integral point lifting $P$.  Indeed, this would yield a solution of $(1+x)\cdot x^p=t^p$ in $\F_p[\![t]\!]$ with $v_tx>0$. This would then imply that $1+x\in \F_p[\![t^p]\!]$ and therefore $x\in \F_p[\![t^p]\!]$. Setting $x=y^p$ and taking $p$-th roots, we get $(1+y)\cdot y^p=t$, which is impossible since the left hand side has $p$-divisible valuation.
\end{example}
\section{Theorem \ref{mainA}}
\subsection{Restating Theorem \ref{mainA}} 
\begin{Theore} \label{mainA}
Assume Conjecture \ref{R}. Suppose $(K,v)$ and $(L,w)$ are henselian and tamely ramified over a valued field $(F,v_0)$ with $\Oo_F$ an excellent DVR. If $\text{RV}(K)\equiv_{\exists,RV(F)} \text{RV}(L)$, then $K\equiv_{\exists,F} L$ in $L_{\text{rings}}$.
\end{Theore}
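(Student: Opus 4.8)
The plan is to translate the statement into algebraic geometry and prove it by induction on dimension; the ingredients that go beyond the classical residue-characteristic-zero and unramified cases are Conjecture~\ref{R} (used to produce a regular model with normal crossings) and Proposition~\ref{RV-HENSEL} (used to lift $RV$-points to integral points). First I would record the standard reduction: an existential $L_r$-sentence with parameters in $F$ is equivalent, after rewriting each inequation $g\neq 0$ as $\exists y\,(yg=1)$, to the assertion that a suitable affine $F$-scheme of finite type has a $K$-rational point; since the hypothesis is symmetric in $K$ and $L$, it suffices to show $V(K)\neq\emptyset\Rightarrow V(L)\neq\emptyset$ for every such $V$, and as a $K$-point lies on a single component of $V_{red}$ we may take $V$ to be an integral affine $F$-variety of dimension $d$ and induct on $d$. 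For $d=0$ we have $V=\Spec E$ with $E/F$ a finite field extension, and $V(K)\neq\emptyset$ says $E$ embeds into $K$ over $F$, which by Proposition~\ref{basecase} holds iff $E$ embeds into $L$ over $F$.

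For the inductive step, fix $P\in V(K)$ with underlying point $P'\in V$. If $\overline{\{P'\}}\subsetneq V$, then $\overline{\{P'\}}$ is an affine $F$-variety of dimension $<d$ possessing a $K$-point, so by induction it has an $L$-point and hence so does $V$; thus we may assume $P'$ is the generic point of $V$, i.e. $F(V)\hookrightarrow_F K$, and write $v'$ for the restriction of $w$ to $F(V)$. After replacing the chosen affine embedding of $V$ by an isomorphic one --- scaling the coordinates by an element of $\Oo_F$ of large value so that $v'$ has non-negative value on them (in the general case this step goes instead through a projective closure together with a coarsening of $w$ in which $F$ becomes residual; see below) --- we may pick a finitely generated $\Oo_F$-subalgebra $A\subseteq F(V)$ with $A\otimes_{\Oo_F}F=F[V]$ and $A\subseteq\Oo_{v'}\subseteq\Oo_K$, so that $P$ restricts to an $\Oo_K$-integral point of the reduced affine $\Oo_F$-model $\mathcal V:=\Spec A$ of $V$. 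By Observation~\ref{defimpliesqe} the ring $\Oo_F$ is local and quasi-excellent, so Conjecture~\ref{R} applies to $\mathcal V$ and yields a blowup $g:\tilde{\mathcal V}\to\mathcal V$ in a nowhere dense center with $\tilde{\mathcal V}$ regular and $\tilde{\mathcal V}_s$ a normal crossings divisor. The generic point of $\mathcal V$ lies outside the center, hence lifts to the generic point of $\tilde{\mathcal V}$, and the valuative criterion for the proper morphism $g$ then lifts $P$ to an $\Oo_K$-integral point $\tilde P$ of $\tilde{\mathcal V}$.

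Let $x\in\tilde{\mathcal V}_s$ be the image of the closed point of $\Spec\Oo_K$ and write, in $\Oo_{\tilde{\mathcal V},x}$, $\pi=h\cdot x_1^{e_1}\cdots x_n^{e_n}$ with $h$ a unit and $\{x_1,\dots,x_n\}$ part of a regular system of parameters ($\pi$ being a uniformizer of $\Oo_F$). The residue field $\kappa(x)$ lies inside the residue field $k$ of $K$, and $k/\kappa$ is separable because $K/F$ is tamely ramified, so $\kappa(x)/\kappa$ is separable. Writing $b_i\in\Oo_K$ for the image of $x_i$ under $\tilde P$ and $\gamma_i=w(b_i)\ge 0$, we have $\sum_i e_i\gamma_i=w(\pi)$, the positive generator of $\Gamma_F$ inside $\Gamma_K$; if every $e_i$ with $\gamma_i>0$ were divisible by $p$ this would put $w(\pi)$ into $p\Gamma_K$, and since $\Gamma_K/\Gamma_F$ has no $p$-torsion (tameness) it would follow that $w(\pi)\in p\Gamma_F$, which is absurd. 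So, after relabelling, $p\nmid e_1$. By the easy implication of Proposition~\ref{RV-HENSEL}, $x\in\tilde{\mathcal V}_s(k)$ and there are $a_1,\dots,a_n\in\Oo_K$ with $h\cdot a_1^{e_1}\cdots a_n^{e_n}\equiv\pi$ in $RV(\Oo_K)$. Now choose a primitive element $\theta$ of $\kappa(x)/\kappa$, with minimal polynomial $g\in\kappa[v]$, and write the residue of $h$ as $G(\theta)$ for some $G\in\kappa[v]$; then the existential $L_{RV}$-sentence
\[
\exists a_1,\dots,a_n\in\Oo\ \ \exists u,h_0\in\Oo^\times\ \ \big(g(\bar{u})=0\ \wedge\ \overline{h_0}=G(\bar{u})\ \wedge\ h_0\cdot a_1^{e_1}\cdots a_n^{e_n}\equiv\pi\big)
\]
has parameters only in $RV(\Oo_F)\subseteq RV(F)$ (the coefficients of $g$ and $G$ lie in $\kappa$, and $\pi\in\Oo_F$) and holds in $RV(K)$, hence by the hypothesis in $RV(L)$. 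A witnessing $\bar u$ is a root of $g$ in $l$, hence determines an embedding $\kappa(x)=\kappa(\theta)\hookrightarrow l$, so $x\in\tilde{\mathcal V}_s(l)$, and $h_0,a_i$ realise $h\cdot a_1^{e_1}\cdots a_n^{e_n}\equiv\pi$ in $RV(\Oo_L)$ in the sense of the footnote to Proposition~\ref{RV-HENSEL}. Since $p\nmid e_1$ and $\kappa(x)/\kappa$ is separable, the substantive implication of Proposition~\ref{RV-HENSEL} produces an $\Oo_L$-integral point $Q$ of $\tilde{\mathcal V}$ lying over $x$; its image under $g$ is an $\Oo_L$-integral point of $\mathcal V$ whose generic fibre is an $L$-point of $V$. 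This closes the induction.

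I expect the main obstacle to be the geometric bookkeeping in the inductive step: producing, from an arbitrary $K$-point of the affine variety $V$ --- whose coordinates may have very negative value, and, for transcendental $K/F$ with $\Gamma_F$ not cofinal in $\Gamma_K$, need not be bounded by $\Gamma_F$ at all --- an honest $\Oo_K$-integral point of a finite-type $\Oo_F$-model whose generic fibre is still (an open subvariety of) $V$, so that the point manufactured by Proposition~\ref{RV-HENSEL} genuinely lies on $V$ rather than on some auxiliary exceptional or boundary locus; one also has to arrange that the center $x$ can be taken to be a closed point, which again needs care when $k/\kappa$ is not algebraic. Handling all this in full generality (compactifying, passing to a coarsening of $w$ in which $F$ becomes part of the residue field, running the argument relative to that coarsening, and letting the dimension induction absorb the exceptional and boundary divisors) is the technical heart of the proof; by contrast the arithmetic inputs --- the normal-crossings model from Conjecture~\ref{R} (legitimate by Observation~\ref{defimpliesqe}), the transfer of $RV$-points via Proposition~\ref{RV-HENSEL}, and the numerical consequence of tameness that some multiplicity $e_i$ is prime to $p$ --- are exactly as above.
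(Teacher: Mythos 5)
Your argument has the same arithmetic core as the paper's proof: induction on dimension with Proposition \ref{basecase} as the zero-dimensional case, Conjecture \ref{R} (available over $\Oo_F$ by Observation \ref{defimpliesqe}) to produce a regular blow-up with normal crossings, the valuative criterion of properness to lift the $\Oo_K$-point, the valuation identity $\sum_i e_i\,v(x_i(P))=v(\pi)$ plus tameness to obtain $p\nmid e_1$ and separability of $\kappa(x)/\kappa$, and the transfer of an existential $RV$-sentence with parameters in $RV(F)$ followed by Proposition \ref{RV-HENSEL}. That part matches the paper's Step 1 and the skeleton of Step 2.

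The gap is in the global reduction, and it is the very point you flag as ``the technical heart'' without closing it. Because you eliminate inequations via $\exists y\,(yg=1)$ and then work with $K$-rational points of an affine $F$-variety $V$, you must afterwards produce an $\Oo_K$-integral point on a finite-type $\Oo_F$-model, and your device for this --- rescaling coordinates by an element of $\Oo_F$ of large value --- fails whenever some coordinate has value not bounded below by $\Gamma_F$. This is not a marginal case: it is exactly the situation in the paper's applications (Corollaries \ref{cor1} and \ref{cor2} pass to saturated models, where $\Gamma_F=\Z$ is far from cofinal in $\Gamma_K$), and already for $V=\mathbb{A}^1_F$ and $a\in K$ with $v(a)$ below every element of $\Gamma_F$, every nonconstant element of $F[V]$ takes negative value, so no generating set inside $\Oo_{v'}$ exists. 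The fallback you sketch (projective closure plus a coarsening in which $F$ becomes residual, with the dimension induction ``absorbing the exceptional and boundary divisors'') is not carried out and does not work as stated: the induction hypothesis converts $K$-points of lower-dimensional varieties into $L$-points, but it cannot move the $\Oo_L$-point produced by Proposition \ref{RV-HENSEL} off the boundary of a compactification or off the blow-up center, and the coarsened valuation is not the one the $RV$-hypothesis refers to. The paper sidesteps all of this by following Denef--Schoutens: an existential witness in $K$ is encoded, via a common-denominator variable quantified over $\Oo_K$ (so no cofinality of $\Gamma_F$ is ever needed), as an $\Oo_K$-integral point of a Zariski open $W$ of an affine finite-type $\Oo_F$-scheme; the induction runs over such pairs $(X,W)$; and Theorem 2.4 of \cite{Den} is invoked precisely to choose the $\Oo_L$-point with underlying $L$-rational point in $f^{-1}(W)-f^{-1}(Z)$. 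With that bookkeeping in place, your normal-crossings analysis goes through essentially verbatim and coincides with the paper's.
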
 
For the valued fields $(K,v)$ and $(L,w)$, we shall abbreviate the residue fields by $k$ and $l$ respectively, and the value groups by $\Gamma$ and $\Delta$ respectively. We write $\pi$ for a uniformizer of $\Oo_F$, $\kappa=\Oo_F/\pi \Oo_F$ for the residue field and $p=\text{char}(\kappa)$. 

\subsection{Geometric reformulation} \label{geometricreform}
We now reformulate our task in geometric terms. Using the disjunctive normal form and replacing conjunctions $\bigwedge_{i=1}^n f_i(X_1,...,X_m)\neq 0$ with a single inequation $\prod_{i=1}^n f_i(X_1,...,X_m)\neq 0$, every existential $L_{\text{rings}}(\Oo_F)$-sentence is equivalent to a disjunction of sentences of the form
$$\phi=\exists X_1,...,X_m (f_1(X_1,...,X_m)=...=f_n(X_1,...,X_m)=0 \mbox{ and } g(X_1,...,X_m)\neq 0) $$
for some $f_i(X_1,...,X_m),g(X_1,...,X_m) \in \Oo_F[X_1,...,X_m]$ and $n,m\in \N$.
It is enough to focus on one such disjunct. We now define the affine $\Oo_F$-algebra
$$B=\Oo_F[X_1,...,X_m]/(f_1(X_1,...,X_m),...,f_n(X_1,...,X_m))$$ 
and let $X=\Spec (B)$ be the associated affine scheme over $\Spec(\Oo_F)$. Let $W\subseteq X_F$ be the (basic) Zariski open subset of the generic fiber, defined by the extra condition $g(X_1,...,X_m)\neq 0$. A witness of $\phi$ in $\Oo_K$ (resp. $\Oo_L$) corresponds to an integral point $P\in X(\Oo_K)$ whose underlying $K$-rational point satisfies $P_K \in W(K)$. 

From the above discussion and by symmetry, it suffices to prove the following:
\begin{Theorema} \label{theorema'}
Assume Conjecture \ref{R}. Let $(K,v)$ and $(L,w)$ be valued field extensions of $(F,v_0)$ with $\Oo_F$ an excellent DVR.  Suppose that $(K,v)/(F,v_0)$ is tamely ramified, $(L,w)$ is henselian and that  $\text{RV}(L)\models \text{Th}_{\exists,\text{RV}(F)} \text{RV}(K)$. Let $X$ be a scheme of finite type over $\Oo_F$ and $W\subseteq X_F$ be a Zariski open subset. Suppose there exists $P\in X(\Oo_K)$ with $P_K\in W(K)$. Then there also exists $Q\in X(\Oo_L)$ with $Q_L\in W(L)$.
\end{Theorema}
Let also $x\in X_s$ be the scheme-theoretic point (not necessarily closed) where $P$ meets the special fiber. The proof of Theorem \ref{theorema'} is divided into two steps: 
\begin{enumerate}
\item We treat the case where $X\to \Spec \Oo_F$ has strict normal crossings at $x$ and $W=X_F$.

\item We treat the general case using Conjecture \ref{R} and an inductive argument on $\dim(X)$. 
\end{enumerate}
\subsubsection{Strict normal crossings} \label{snccase}
First assume that $W=X_F$ and $X\to \Spec \Oo_F$ has strict normal crossings. Since $X\to \Spec \Oo_F$ has strict normal crossings at $x$, there exists a regular system of parameters $\{x_1,...,x_m\}$ in $\Oo_{X,x}$ such that $h\cdot x_1^{e_1}\cdot ...\cdot x_n^{e_n}=\pi $, where $\pi$ is a uniformizer of $\Oo_F$ and $h\in \Oo_{X,x}^{\times}$. 

The point $P$ corresponds to a local $\Oo_F$-algebra homomorphism $\Oo_{X,x}\to \Oo_K$ mapping $f\mapsto f(P)$. Taking valuations in the equation 
$$h(P)\cdot x_1^{e_1}(P)\cdot ...\cdot x_n^{e_n}(P)=\pi$$ 
and noting that $h(P)$ is a unit in $\Oo_K$, yields 
$$\sum_{i=1}^n e_i\cdot vx_i(P) = v\pi$$
Since $\Gamma/\Z v\pi$ has no $p$-torsion elements, we get that $p\nmid e_i$, for some $i\in \{1,...,n\}$. Suppose $p\nmid e_1$, without loss of generality. Moreover, we have that $\kappa(x)/\kappa$ is separable because $k/\kappa$ is separable and $\kappa\subseteq \kappa(x)\subseteq k$. Now we may assume that $L$ is $|K|^+$-saturated as an $L_{\text{RV}}$-structure. Since $\text{RV}(L)\models \text{Th}_{\exists, \text{RV}(F)} \text{RV}(K)$, this furnish us with an embedding $\rho:\text{RV}(\Oo_K)\hookrightarrow \text{RV}(\Oo_L)$ over $\text{RV}(\Oo_F)$ and a compatible embedding of residue fields $\overline{\rho}:k\to l$. In particular, the point $\overline{x}$ induces a rational point $\overline{x}' \in X_s(l)$ above $x$ and there exist $a_1,...,a_n\in \mathfrak{m}_L$ such that $\iota(\overline{\rho}(\overline{h}))\cdot \text{rv}(a_1^{e_1}\cdot ...\cdot a_n^{e_n})=\text{rv}(\pi)$. Proposition \ref{RV-HENSEL} applies to give us $Q\in X(\Oo_L)$ meeting $X_s$ at $x$.
\subsubsection{Reduction to strict normal crossings} \label{generalcasethma}
Our analysis of the general case follows the line of reasoning of the proof of Theorem 4.3 \cite{Den}. Let $X$ be a scheme of finite type over $\mathcal{O}_F$ and $W\subseteq X_F$ be Zariski open. We shall argue by induction on $\dim(X)$ 
that if there exists $P\in X(\Oo_K)$ with $P_K\in W(K)$, then there also exists $Q\in X(\Oo_L)$ with $Q_L\in W(L)$. The base case $\dim(X)=0$ holds vacuously because there cannot exist $P\in X(\Oo_K)$ when $\dim(X)=0$. 
 We only need to explain how the inductive step works.

We shall argue that it is enough to assume that $X$ is integral and affine. We may first assume that $X$ is reduced by passing to its reduced underlying scheme. To see this, note that $\Oo_K$ is reduced and therefore the integral point $P: \Spec(\Oo_K)\to X$ factors (uniquely) as $\Spec(\Oo_K)\xrightarrow{P'} X_{\text{red}}\to X$. 
We have an equality $|X|=|X_{\text{red}}|$ of underlying topological spaces and $P'(\eta_K)=P(\eta_K)$, where $\eta_K$ is the generic point of $\Spec(\Oo_K)$. In particular, we have $P'(\eta_K)\in W$. It would then suffice to prove the inductive step for $X_{\text{red}}$ in place of $X$, so that we may assume $X=X_{\text{red}}$ to begin with. We may then assume that $X$ is irreducible, replacing $X$ with one of its irreducible components which contains the scheme-theoretic image of $P:\Spec (\Oo_K)\to X$. Finally, replacing $X$ with an affine neighborhood allows us to assume that $X=\Spec(B)$, where $B$ is an integral domain, which is a finitely generated $\Oo_F$-algebra with $\Oo_F\to B$ injective (here $\Oo_F\to B$ corresponds to the structure morphism $X\to \Spec(\Oo_F)$). 

It follows that $B$ is a torsion-free $\Oo_F$-module and therefore flat by Tag 0539 \cite{sp}. By Conjecture \ref{R}, we will have a blow-up morphism $f:\tilde{X}\to X$ with $\tilde{X}\to \Spec \Oo_F$ having strict normal crossings and $f$ an isomorphism outside a nowhere-dense closed subscheme $Z\subsetneq X$, the center of $f$. If $P_K\in Z_K(K)$, then $P$ is an integral point of $Z$ and since $\dim(Z)<\dim(X)$, the conclusion follows from our induction hypothesis. Otherwise, we will have that $P_K\in X_F(K)-Z_F(K)$ and $P_K$ lifts to $\tilde{P}_K:\Spec(K)\to \tilde{X}$, using that $f_F:\tilde{X}_F\to X_F$ is an isomorphism outside $Z_F\subsetneq X_F$. By the valuative criterion of properness, the integral point $P$ lifts to an $\Oo_K$-integral point of $\tilde{X}$  as in the diagram below
\[
  \begin{tikzcd}
    \Spec K \arrow[d] \arrow[r,"\tilde{P}_K"] & \tilde{X}  \arrow[d,"f"]\\
   \Spec \Oo_K  \arrow[ur, dotted, "\exists"] \arrow[r, "P"] & X 
     \end{tikzcd}
\]

By the analysis of the strict normal crossings case, one also gets an $\mathcal{O}_L$-integral point of $\tilde{X}$. Now $W-Z_F$ is a Zariski dense open subset of $X_F$. To see this, note that $W, X-Z_F\subseteq X_F$ are non-empty and $X_F$ is irreducible. By Theorem 2.4 \cite{Den}, the $\Oo_L$-integral point of $\tilde{X}$ can be chosen so that its underlying $L$-rational point is in $f_F^{-1}(W)-f_F^{-1}(Z_F)$, which is a Zariski dense open subset of $\tilde{X}_F-f_F^{-1}(Z_F)$, as $f_F$ is an isomorphism outside $Z_F$ and $W-Z_F$ is a Zariski dense open subset of $X_F$. This point induces an $\mathcal{O}_L$-integral point of $Q\in X(\Oo_L)$ with $Q_L\in W(L)$ via composition with $f$, which is what we wanted to show. 
\subsection{Some remarks on Theorem \ref{mainA}}

\begin{rem}
By Fact \ref{Hiro}, Conjecture \ref{R} holds when the residue characteristic is $0$ and the above proof becomes unconditional. In that case, it becomes automatically true that $\Oo_F$ excellent (see Corollary 8.2.40$(c)$ \cite{Liu}) and that the valued field extensions are tamely ramified (see Example \ref{exampletame}(1)). Therefore, one recovers the \textit{existential} version of Ax-Kochen/Ershov in residue characteristic $0$. In \cite{Den2}, Denef manages to recover the full first-order Ax-Kochen/Ershov theorem in residue characteristic $0$ using weak toroidalization of morphisms.
\end{rem}
In the case of \textit{finite} tame ramification in \text{mixed} characteristic and when the residue fields are perfect, the full first-order version of Theorem \ref{mainA} has been proved \textit{unconditionally} by J. Lee:
\begin{rem}[see Corollary 5.9 \cite{Junguk}] \label{jungukrem}
When $(K,v),(L,w)$ are henselian valued fields, finitely and tamely ramified over $(\Q,v_p)$ with perfect residue fields, then J. Lee proves unconditionally that $\text{RV}(K)\equiv \text{RV}(L)$ implies $K\equiv L$ in $L_{\text{rings}}$. 
\end{rem}


Our proof did not use the assumption that $F$ admits no defect extensions inside $K$ (resp. $L$), which is part of the definition of a tamely ramified extension (see \ref{tamelyramdfn}). The reason is that $(F,v_0)$ is automatically defectless (see \S \ref{defectcomment}). If $\Oo_F$ is not assumed to be excellent and $K,L$ are only tamely ramified in the weak sense of Endler \cite{Endler}, then the conclusion of Theorem  \ref{mainA} does not necessarily hold:
\begin{example} \label{counternondef}
Let $R$ be the DVR that was introduced in Example \ref{defex}, for which there exists $\alpha\in R$ such that  $\alpha^{1/p} \in \widehat{R}-R^h$. Set $K=\text{Frac}(R^h)$, $L=\text{Frac}(\widehat{R})$ and $F=\text{Frac}(R)$. We have that $K \not \equiv_{\exists, F} L$, although $\text{RV}(K)\cong_{\text{RV}(F)}\text{RV}(L)$ (see Lemma \ref{RV-lemma}).
\end{example}
%
\section{Applications} \label{app}

\subsection{Decidability} \label{akegen}

\subsubsection{Motivation}
In Remark 7.6 \cite{AnscombeFehm}, the authors write:\\"\textit{At present, we do not know of an example of a mixed characteristic henselian valued field $(K, v)$ for which $k$ and $(\Gamma, vp)$ are $\exists$-decidable but $(K, v)$ is $\exists$-undecidable.}" The existence of such an example is demonstrated in Remark 3.6.9 \cite{KK}. However, if we restrict ourselves to the tamely ramified setting and require that $(K,v)$ admits a cross-section extending a cross-section of $(\Q,v_p)$, we indeed get such an Ax-Kochen/Ershov style statement in Corollary \ref{dec1} (modulo Conjecture \ref{R}). In fact, Corollary \ref{dec1} is stated in a uniform fashion in all characteristics.

\subsubsection{Existential Ax-Kochen/Ershov}
For the sake of simplicity and concreteness, we take our base field $(F,v_0)$ to be any of the valued fields $(\Q(t),v_t),(\Q,v_p)$ and $(\F_p(t),v_t)$. The associated valuation rings are indeed excellent DVRs (see Corollary 8.2.40 \cite{Liu}).  
The equal characteristic $0$ and \textit{unramified} mixed characteristic versions of Corollaries \ref{cor1} and \ref{dec1} are well-known. However, in this level of generality, the mixed characteristic and positive characteristic versions are new. 
\bc \label{cor1}
Assume Conjecture \ref{R}. Suppose $(K,v)$ and $(L,w)$ are henselian valued fields tamely ramified over 
$(\Q,v_p)$ (resp. $(\F_p(t),v_t)$ or $(\Q(t),v_t)$).
Suppose $(K,v)$ and $(L,w)$ admit cross-sections that restrict to the same cross-section of $(\Q,v_p)$ 
(resp. $(\F_p(t),v_t)$ or $(\Q(t),v_t)$). 
If $k\equiv_{\exists} l$ in $L_{\text{rings}}$ and $(\Gamma,vp)\equiv_{\exists} (\Delta,wp)$ in $L_{\text{oag}}$ (resp. $(\Gamma,vt)\equiv_{\exists} (\Delta,wt)$), then $K\equiv_{\exists} L$ in $L_{\text{rings}}$ (resp. $L_t$).
\ec 
\begin{proof} 
We focus on the mixed characteristic case---the proof applies verbatim to the other two cases. Let $(L^*,w^*,s^*_L)$ be a $|K|^+$-saturated elementary extension of $(L,w,s_L)$ in the language $L_{\text{AKE}}$ (see notation). In particular, the residue field $l^*$ and the value group $\Delta^*$ are themselves $|k|^+$-saturated and $|\Gamma|^+$-saturated respectively. Since $l \models \text{Th}_{\exists}k$ and $(\Delta,wp) \models \text{Th}_{\exists}(\Gamma,vp)$, we get a field embedding $\rho:k\hookrightarrow l^*$ and an embedding of pointed ordered abelian groups $\sigma:(\Gamma,vp) \hookrightarrow (\Delta^*,w^*p)$. 

The structure $(\text{RV}(K),\oplus,\cdot, \leq ,1,\infty)$ (resp. $(\text{RV}(L^*),\oplus,\cdot, \leq ,1,\infty)$) described in \ref{dfn} is isomorphic to the structure $((k^{\times}\times \Gamma)\cup \{\infty\}, \oplus,\cdot,\leq,1,\infty)$ (resp. $((l^{* \times}\times \Delta^*)\cup \{\infty\}, \oplus,\cdot,\leq,1,\infty)$) via the identification described in \ref{cross-section}. Moreover, these identifications are compatible with the identification of $(\text{RV}(\Q),\oplus,\cdot, \leq ,1,\infty)$ with $((\F_p^{\times}\times \Z)\cup \{\infty\}, \oplus,\cdot,\leq,1,\infty)$, as the cross-sections of $K$ and $L^*$ extend the one of $\Q$. The maps $\rho$ and $\sigma$ combine to give us an embedding of RV-structures $\text{RV}(K)\hookrightarrow \text{RV}(L^*)$ over $\text{RV}(\Q)$. Reversing the roles of $K$ and $L$, we deduce that $\text{RV}(K)\equiv_{\exists, \text{RV}(\Q)} \text{RV}(L)$ and the conclusion follows from Theorem \ref{mainA}.
\end{proof}
The cross-section condition of Corollary \ref{cor1} cannot be omitted, as the following example shows:
\begin{example} \label{crosseasy}
Take $p\neq  2$ such that $2\notin (\F_p^{\times})^2$ (e.g. $p=3$). Let $(K,v)=(\Q_p(p^{1/2}),v_p)$ and $(L,w)=(\Q_p((2p)^{1/2}),v_p)$. It is clear that $k=l=\F_p$ and $(\Gamma,vp)=(\Delta,wp)\cong (\Z,2)$. On the other hand, we have that $K\not \equiv_{\exists} L$ in $L_{\text{rings}}$. Indeed, if $p^{1/2}\in L$ this would imply that $2\in (\F_p^{\times})^2$.
\end{example}
This should be contrasted with the \textit{unramified} case, where the cross-section condition may be dropped. The equal characteristic $0$ and mixed characteristic parts of Remark \ref{crossrem} are well-known. The (conditional) part $(b)$ of Remark \ref{crossrem} can also be deduced from the method of Denef-Schoutens \cite{Den}, although it is not explicitly mentioned in their paper:
\begin{rem} \label{crossrem}
$(a)$ When $(K,v)$ is \textit{unramified}, meaning that $vp$ (resp. $vt$) is the smallest positive element of $\Gamma$, there is always an elementary extension admitting a cross-section which extends the standard cross-section $n\mapsto p^n$ (resp. $n\mapsto t^n$) of $(\Q,v_p)$ (resp. $(\F_p(t),v_t)$ or $(\Q(t),v_t)$). This follows from Proposition 5.4 \cite{vdd}. In particular, one can drop the cross-section condition in Corollary \ref{cor1} in the unramified setting.\\
$(b)$ Assuming Conjecture \ref{R}, we deduce that if $(K,v)$ is a henselian valued field extending $(\F_p(t),v_t)$ with $k=\F_p$ and $(\Gamma,vt)\equiv_{\exists} (\Z,1)$ in $L_{\text{oag}}$ with a constant for the value of $t$, then $K\equiv_{\exists} \F_p(\!(t)\!)$ in $L_{t}$. It is worth noting that the condition $(\Gamma,vt)\equiv_{\exists} (\Z,1)$ is simply equivalent to asking that $vt$ be minimal positive in $\Gamma$ (cf. Corollary 1.6 \cite{weisp}).
\end{rem}
\begin{rem}
If we do not ask for $(K,v)$ and $(L,w)$ to be tamely ramified over $(\Q,v_p)$ (resp. $(\F_p(t),v_t)$), the conclusion of Corollary \ref{cor1} may fail (see Example \ref{abhexample}). 
\end{rem}

\bc \label{dec1}
Assume Conjecture \ref{R}. Suppose $(K,v)$ is henselian and tamely ramified over $(\Q,v_p)$ (resp. $(\F_p(t),v_t)$ or $(\Q(t),v_t)$). 
Suppose $(K,v)$ admits a cross-section that extends a cross-section of $(\Q,v_p)$ (resp. $(\F_p(t),v_t)$ or $(\Q(t),v_t)$). 
Then $K$ is existentially decidable in $L_{\text{rings}}$ (resp. $L_t$) relative to $k$ in $L_{\text{rings}}$ and $(\Gamma,vp)$ (resp. $(\Gamma,vt)$) 
in $L_{\text{oag}}$ with a constant for the value of $p$ (resp. $t$).
\ec
\begin{proof}
We again focus on the mixed characteristic version. Assume $k$ (resp. $\Gamma$) is existentially decidable in $L_{\text{rings}}$ (resp. $L_{\text{oag}}$ with a constant for $vp$). Consider the $L_{\text{AKE}}$-theory 
$$T=\text{Hen} \cup T_{\text{res}}\cup T_{\text{vg}}\cup \text{Diag}_K(\Q)$$ 
where $\text{Hen}$ is the theory of henselian valued fields with a cross-section and  
$$T_{\text{res}}=\{ \phi \in L_{\text{rings}}: \phi \mbox{ existential and }k\models \phi\} \cup \{  \phi \in L_{\text{rings}}: \phi \mbox{ universal and }k\models  \phi\}$$
and similarly
$$T_{\text{vg}}=\{ \phi \in L_{\text{oag},vp}: \phi \mbox{ existential and }\Gamma\models \phi\} \cup \{ \phi \in L_{\text{oag},vp}: \phi \mbox{ universal and }\Gamma\models  \phi\}$$ 
Finally, $\text{Diag}_K(\Q)$ is the atomic diagram of $\Q$ in $K$ in $L_{\text{AKE}}$. By our assumptions on $k$ and $\Gamma$, we have that the above axiomatization is recursive. \\
\textbf{Claim:} For every existential or universal sentence $\phi \in L_{\text{rings}}$, we have $T\models \phi \iff K\models \phi$.
\begin{proof}
If $T\models \phi$, then clearly $K\models \phi$. For the converse, suppose that $K\models \phi$ and let $(L,w,s_{L})\models T$ with residue field $l$, value group $\Delta$ and cross-section $s_L:\Delta\to L^{\times} $. Note that $\Delta/\Z wp$ has no $p$-torsion elements, using that $(\Delta,wp)\equiv_{\exists} (\Gamma,vp)$ and that $\Gamma/ \Z vp$ has no $p$-torsion elements. 
%
Since $(\Q,v_p)$ is defectless and $l/\F_p$ is separable, we deduce that $(L,w)/(\Q,v_p)$ is tamely ramified. Since $L\models \text{Diag}_K(\Q)$, we have that $s_L$ and $s_K$ restrict to the same cross-section of $(\Q,v_p)$. 
By Corollary \ref{cor1}, we see that $L \models \phi$. 
\qedhere $_{\textit{Claim}}$ \end{proof}
In particular, the theory $T$ is \textit{existentially complete} with respect to $L_{\text{rings}}$, meaning that for every existential sentence $\phi \in L_{\text{rings}}$ either $T\models \phi$ or $T\models \lnot \phi$. A brute-force enumeration of all proofs from the axioms of $T$ now yields an effective procedure for deciding whether $K\models \phi$, for any existential sentence $\phi \in L_{\text{rings}}$. 
\end{proof}
\begin{rem} \label{transisessential}
Even if $(K,v)$ is algebraic over $(\Q,v_p)$, the model $(L,w)$ constructed in the proof of Corollary \ref{dec1} will generally be highly transcendental, the residue field $l$ will also be highly transcendental and $\Delta$ will be of rank greater than $1$. It is therefore essential---even if one is merely interested in algebraic extensions---that we have proved Theorem \ref{mainA} in this level of generality.
\end{rem}
Once again, the cross-section condition cannot be omitted from Corollary \ref{dec1}. We provide a counter-example in equal characteristic $0$:
\begin{example} \label{tamex}
For each $\alpha \in 2^{\omega}$, we define an equal characteristic $0$ valued field $K_{\alpha}$ as follows. First, write $\alpha \restriction n$ for the restriction of $\alpha$ to $n=\{0,1,...,n-1\}$ if $n>0$ and $\alpha \restriction 0=0$.  Now define inductively:
\begin{enumerate}
\item $(K_0,v_t)=(\Q_2(\!(t)\!),v_t)$ and $\pi_{0}=t$.

\item $K_{\alpha \restriction n}=K_{\alpha \restriction (n-1)}((2^{\alpha(n-1)}\cdot \pi_{\alpha \restriction (n-1)})^{1/2} )$ and $\pi_{\alpha \restriction n}=(2^{\alpha(n-1)} \cdot \pi_{\alpha \restriction (n-1)})^{1/2} $ for $n>0$.
\end{enumerate}
More succinctly, for $n\in \N$ we have that $K_{\alpha \restriction n}=\Q_2(\!(t)\!) ((2^{\overline{\alpha}_n} \cdot t)^{1/2^n})$, where $\overline{\alpha}_n= \sum_{k=0}^{n-1} \alpha(k)\cdot 2^{k}$.
We let $K_{\alpha}=\bigcup_{n\in \N} K_{\alpha \restriction n}$. For every $\alpha\in 2^{\omega}$, we have that $(K_{\alpha},v_t)$ is henselian, being an algebraic extension of $(\Q_2(\!(t)\!),v_t)$. It has value group $(\Gamma_{\alpha}, vp)=(\frac{1}{2^{\infty}}\Z,1)$ and residue field $k_{\alpha}=\Q_2$, both of which are decidable. Indeed, the former is decidable using results by Robinson-Zakon \cite{Rob} and the latter is decidable by Ax-Kochen/Ershov.\footnote{The former needs some explanation: Let $T$ be the theory in $L_{\text{oag}}$ that requires of $\Gamma$ that it is regularly dense, that $[\Gamma:2\Gamma]=1$ and $[\Gamma:p\Gamma]=p$ for any prime $p>2$. By Theorem 4.4 \cite{Rob} (see also the proof), $T$ is model-complete in $L_{\text{oag}}\cup \{P_n:n\in \N \}$ (where $P_n(x)\leftrightarrow \exists y (ny=x)$) and also complete. Consider the expansion $L_{\text{oag}}\cup \{P_n:n\in \N \} \cup \{1\}$ and the theory $T'$, which in addition asks that $1>0$ and that $p\nmid 1$ if $p>2$. It is clear that $T'$ is still model-complete and that $(\frac{1}{2^{\infty}}\Z,1)$ is a prime model of $T'$. The decidability of $(\frac{1}{2^{\infty}}\Z,1)$ in $L_{\text{oag}}$ follows. \label{footnote14}} 

If $\alpha \neq \beta$, then $K_{\alpha}\not \equiv_{\exists} K_{\beta}$ in $L_{t}$. Indeed, suppose that $K_{\alpha} \equiv_{\exists} K_{\beta}$ and let $n\in \omega$ be least such that $\alpha(n)\neq \beta(n)$ and say $\alpha(n)=1$. We would then have $a,b\in K_{\alpha}$ such that $a^{2^n}=2^{\overline{\alpha}_n}\cdot t$ and $b^{2^n}= 2^{\overline{\beta}_n} \cdot t$. It follows that $c^{2^n}=2^{2^{n-1}}$, where $c=\frac{a}{b}$. Reducing this equation over the residue field $k_{\alpha}=\Q_2$, we get $\overline{c}^{2^n}=2^{2^{n-1}}\Rightarrow 2\cdot v_2 \overline{c}=1$, which has no solution in the value group $\Z$ of $\Q_2$. We conclude that $K_{\alpha}\not \equiv_{\exists} K_{\beta}$ in $L_{t}$. Since $2^{\omega}$ is uncountable and there are countably many Turing machines, there must exist an $\alpha \in 2^{\omega}$ such that $K_{\alpha}$ is $\exists$-undecidable in $L_{t}$.
\end{example}

\begin{rem}
The algorithm provided by the proof of Corollary \ref{dec1} is effective only in theory, meaning that it is very difficult to implement in practice. A similar remark applies to the proof of Denef-Schoutens \cite{Den} (see Remark \ref{lastrem}$(b)$). In \S \ref{Denef-Schoutensec}, we will present an alternative (conditional) proof of the existential decidability of $\F_p(\!(t)\!)$, which becomes effective once an effective desingularization algorithm in positive characteristic becomes available (see Remark \ref{lastrem}$(b)$).
\end{rem}


\subsubsection{Proof of Corollary \ref{maincor}}
Among the fields that are existentially decidable, the maximal tamely ramified extensions of $\Q_p$ and $\F_p(\!(t)\!)$ occur naturally in ramification theory and are of arithmetic significance. 
\begin{Corol}[Ramification fields]
Assume Conjecture \ref{R}. Then the field $\Q_p^{tr}$ (resp. $\F_p(\!(t)\!)^{tr}$) is existentially decidable in $L_{\text{rings}}$ (resp. $L_t$).
\end{Corol} 
\begin{proof}
From Fact \ref{expl}, one sees that both of these fields have residue field $\overline{ \F}_p$ and value group $\Z_{(p)}$. The field $\overline{ \F}_p$ is decidable in $L_{\text{rings}}$ and $(\Z_{(p)},1)$ is decidable in $L_{\text{oag}}$ with a constant symbol for $1$. The latter is an application of Robinson-Zakon \cite{Rob} (cf. footnote \ref{footnote14}). Moreover, the field $\Q_p^{tr}$ (resp. $\F_p(\!(t)\!)^{tr}$) admits a cross-section mapping $\gamma\mapsto p^{\gamma}$ (resp. $\gamma\mapsto t^{\gamma}$). The conclusion follows from Corollary \ref{dec1}.
\end{proof}
Other examples include $\Q_p(p^{1/\ell^{\infty}})$ and $\F_p(\!(t)\!)(t^{1/\ell^{\infty}})$, where $\ell$ is a prime different from $p$. These are again existentially decidable in $L_{\text{rings}}$ and $L_t$ respectively.
\subsection{Taming Abhyankar's example} \label{tweakabh}
For an application of a different kind, we present a tame variant of the following famous example due to Abhyankar \cite{Abh2}. It is also presented by Kuhlmann in Example 3.13 \cite{defect} in relation to the defect:
\begin{example} \label{abhexample}
Let $(K,v)=(\F_p(\!(t)\!)^{1/p^{\infty}},v_t)$ and $(L,w)=(\F_p(\!(t^{\Gamma_p})\!),v_t)$ be the Hahn series field with value group $\Gamma_p=\frac{1}{p^{\infty}}\Z$ and residue field $\F_p$. We observe that $\text{RV}(K)\cong_{\text{RV}(\F_p(\!(t)\!) )} \text{RV}(L)$ but $(K,v) \not \equiv_{\exists, \F_p(\!(t)\!)} (L,w)$ since the Artin-Schreier equation $x^p-x-1/t=0$ has a solution in $L$ but not in $K$. Note that both $K,L$ admit a cross-section which sends $\gamma\mapsto t^{\gamma}$. This example therefore demonstrates why Corollary \ref{cor1} is not true without the tameness assumption.
\end{example}
Our version of Abhyankar's example is obtained by replacing $p$-power roots of $t$ with $\ell$-power roots and exhibits a totally different behaviour (as expected):
\begin{example} \label{lexample}
Fix any prime $\ell\neq p$. Consider the valued fields $(K,v)=(\F_p(\!(t)\!)(t^{1/\ell^{\infty}}),v_t)$ and $(L,w)=(\F_p(\!(t^{\Gamma_{\ell}})\!),v_t)$, with the latter being the Hahn series field with value group $\Gamma_{\ell}=\frac{1}{\ell^{\infty}}\Z$ and residue field $\F_p$. We observe that $\text{RV}(K) \cong \text{RV}(L)$ and by Theorem \ref{mainA} we get that $(K,v)\equiv_{\exists, \F_p(\!(t^{1/\ell^n})\!)} (L,w)$, for all $n\in \N$. It follows that $K \preceq_1 L$ in $L_{\text{rings}}$.
\end{example}
\begin{rem}
Similarly, we have that $\Q_p(p^{1/\ell^{\infty}})$ is existentially closed in every maximal immediate extension. 
\end{rem}

\section{Revisiting Denef-Schoutens} \label{Denef-Schoutensec}
\subsection{Overview of Denef-Schoutens}
In Theorem 4.3 \cite{Den}, Denef-Schoutens proved the existential decidability of $\F_p(\!(t)\!)$ in $L_t$, assuming resolution of singularities for schemes over fields (Conjecture 1 \cite{Den}). We state Conjecture 1 \cite{Den} below for the convenience of the reader: 
\begin{Conje} \label{conjecture1}
Let $X$ be a reduced scheme of finite type over a field $k$. Then there exists a blow-up morphism $f:\tilde{X}\to X$ in a nowhere dense center $Z\subsetneq X$ such that $\tilde{X}$ is regular.
\end{Conje} 
Using Conjecture \ref{conjecture1}, Denef-Schoutens reduce the problem of existential decidability of $\F_p(\!(t)\!)$ in $L_t$ to the problem of deciding whether a given scheme $Y$ of finite type over $\F_p[\![t]\!]$ has an $\F_p[\![t]\!]$-integral point (see the proof of Theorem 4.3 \cite{Den}). They solve the latter using an \textit{effective} Greenberg approximation theorem (Theorems 3.2, 6.1 \cite{vddbeck}). In particular, they prove \textit{unconditionally} that the \textit{positive} existential theory of $\F_p[\![t]\!]$ is decidable in $L_t$ (Proposition 3.5 \cite{Den}).
\subsection{Comparison with Denef-Schoutens}
Assuming Conjecture \ref{R}, our Theorem \ref{mainA} also implies the existential decidability of $\F_p(\!(t)\!)$ in $L_t$ (see Corollary \ref{dec1}). Moreover, by Remark \ref{crossrem}$(b)$ we have a simple system of axioms which captures the existential theory of $\F_p(\!(t)\!)$ in $L_t$. Note that our proof does not make use of Greenberg's approximation theorem. On the other hand, Conjecture \ref{R} is more refined than Conjecture \ref{conjecture1}. We shall now provide a simplified proof in the case of $\F_p(\!(t)\!)$, which only relies on Conjecture \ref{conjecture1}. 

\subsection{Simplified proof for $\F_p(\!(t)\!)$}
We refer to Appendix \ref{appalg} for background material related to computational algebraic geometry. Let $X_0$ be a \textit{given} affine scheme of finite type over $\F_p[t]$ 
and $W$ a given Zariski open subset of the generic fiber $(X_0)_{\F_p(t)}$. Our task is to decide whether there exists $P\in X_0(\F_p[\![t]\!])$ such that $P_{\eta}\in W(\F_p(\!(t)\!) )$, where $P_{\eta}$ is the underlying $\F_p(\!(t)\!)$-rational point. We note that $X$ and $W$ may be viewed as algorithmic inputs in the sense of \ref{schemesasinputs}.

We shall also write $X=X_0\times_{\Spec(\F_p[t])} \Spec(\F_p[\![t]\!])$ for the base change via $\Spec(\F_p[\![t]\!])\to  \Spec(\F_p[t])$ and $X_s$ for the special fiber.

\subsubsection{Non-singular case} \label{nonsingcase}
Our proof relies on the observation that one can avoid using the effective Greenberg approximation theorem to check if $X$ has an $\F_p[\![t]\!]$-integral point, in case $X$ is \textit{regular} at all points $x\in X_s$. 
Instead, one can use a more elementary fact. Since we were not able to find a reference for Proposition \ref{regsmooth} in published literature, we shall spell out the details. We do mention however that Proposition \ref{regsmooth} follows easily from Proposition 2, pg.61 \cite{Bosch}. 
\bp \label{regsmooth} 
Let $R$ be a DVR with residue field $\kappa$ and $f : X \to \Spec R$ be a morphism of finite type. Let $P$ be an $R$-integral point of $X$, meeting the special fiber at $x\in X_s(\kappa)$ and suppose that $X$ is regular at $x$. Then $f$ is smooth at $x$. 
\ep 
\begin{proof}
By the definition of a smooth morphism (Definition 4.3.35 \cite{Liu}), it suffices to show that $f:X\to \Spec R$ is flat at $x$ and that $X_s$ is smooth at $x$, as an algebraic variety over $\kappa$. To this end, we may assume that $X$ is the local scheme $\Spec(A)$, where $A$ is a regular local ring (by replacing $X$ with $\Spec(\Oo_{X,x})$).\\
\textbf{Claim 1:} The morphism $f$ is flat at $x$.
\begin{proof}
Since $A$ is a regular local ring, it is also an integral domain (Proposition 4.2.11 \cite{Liu}). Since $X$ admits an $R$-integral point, we get that $f:R\to A$ is injective. We conclude that $A$ is a torsion-free $R$-module and therefore flat (see Tag 0539 \cite{sp}). 
\qedhere $_{\textit{Claim 1}}$ \end{proof}

An integral point $P:\Spec R\to X$ corresponds to a section of $f:X\to \Spec R$. The maps $f$ and $P$ induce ring homomorphisms $f^*:R/\mathfrak{m}_R\to A/\mathfrak{m}_A$ and $P^*:A/\mathfrak{m}_A\to R/\mathfrak{m}_R$ such that $P^*\circ f^*=id$. The latter condition means that $P^*$ is surjective. On the other hand, a ring homomorphism between fields is always injective, whence $P^*:A/\mathfrak{m}_A\xrightarrow{\cong} R/\mathfrak{m}_R=\kappa$ is an isomorphism. Since $R$ is a DVR, we may consider $\mathfrak{m}_R/\mathfrak{m}_R^2$ as a $1$-dimensional $\kappa$-vector subspace of $\mathfrak{m}_A/\mathfrak{m}_A^2$, spanned by $ \pi+\mathfrak{m}_A^2$, where $\pi$ is a uniformizer of $R$. One may extend $\{\pi+\mathfrak{m}_A^2 \}$ to a basis of $\mathfrak{m}_A/\mathfrak{m}_A^2$, say $\{\pi+\mathfrak{m}_A^2,x_1+\mathfrak{m}_A^2,...,x_n+\mathfrak{m}_A^2\}$ with $x_i\in \mathfrak{m}_A$. \\
\textbf{Claim 2:} The scheme $X_s$ is smooth at $x$.
\begin{proof}
Let $B=A\otimes_R R/\mathfrak{m}_R\cong A/\pi A$ be the local ring of the special fiber at $x$. We will then have that $\{x_1+\mathfrak{m}_B^2,...,x_n+\mathfrak{m}_B^2\}$ is a $\kappa$-basis for $\mathfrak{m}_B/\mathfrak{m}_B^2$. By flatness, we get that $\dim(B)=\dim(A)-1$ (Theorem 4.3.12 \cite{Liu}) and therefore that the local ring $B$ is regular, i.e. $X_s$ is regular at $x$. Since $\kappa(x)=\kappa$, we get that $X_s$ is smooth at $x$ (Proposition 4.3.30 \cite{Liu}).
\qedhere $_{\textit{Claim 2}}$ \end{proof}

\end{proof}
We refer to Definition \ref{givenschemeconve}, which allows us to view affine schemes of finite type (over a computable base ring) as algorithmic inputs.
\bc \label{corint}
There exists an algorithm to decide whether a given regular affine scheme $X_0$ of finite type over $\F_p[t]$ has an $\F_p[\![t]\!]$-integral point.
\ec
\begin{proof}
We keep our notation from \S \ref{nonsingcase}; $X=X_0\times_{\Spec(\F_p[t])} \Spec(\F_p[\![t]\!])$ and $X_s$ is the special fiber of $X_0$, which is also identified with the special fiber of $X$. Say $X_0=\Spec(A)$ where $A=\F_p[t,x_1,...,x_m]/(f_1,...,f_s)$ with $f_i\in \F_p[t,x_1,...,x_m]$. Any $\F_p[\![t]\!]$-integral point of $X_0$ corresponds to an $\F_p[\![t]\!]$-integral point of one of its irreducible components. Since we may effectively compute the irreducible components of $X_0$ (see Fact \ref{seidenfact}), this allows us to further assume that $X_0$ is irreducible. Since $X_0$ is already reduced (being regular), we have that $X_0$ is integral.

For any $x\in X_s$, we have that $X_0$ is regular at $x$ if and only if $X$ is regular at $x$ (see Tag 0BG6$(2)$ \cite{sp}). Together with our assumption, this gives that $X$ is regular at all points $x\in X_s$. Now $X_s$ has finitely many $\F_p$-rational points, the set of which is computable by brute-force. It is therefore enough to check whether a given $x\in X_s(\F_p)$ lifts to an $\F_p[\![t]\!]$-integral point. By Proposition \ref{regsmooth} and Hensel's Lemma (Corollary 6.2.13 \cite{Liu}), we equivalently need to check if $X\to \Spec(\F_p[\![t]\!])$ is smooth at $x$. \\
\textbf{Claim 1:} We can effectively check if $X\to \Spec(\F_p[\![t]\!])$ is flat at $x$.
\begin{proof}
Equivalently, we need 
to see if $t$ is a zero-divisor in $A$. 
Since $A$ is an integral domain, we need to check if $t=0$ in $A$ (i.e., if $X_0=X_s$). Equivalently, we need to check if the generic fiber $(X_0)_{\F_p(t)}=\Spec(\F_p(t)[x_1,...,x_m]/(f_1,...,f_s))$ is non-empty. Equivalently, we need to see if $(f_1,...,f_s)=(1)$ in $\F_p(t)[x_1,...,x_m]/(f_1,...,f_s)$, which can be checked effectively by Fact \ref{seidenfact2}.
\qedhere $_{\textit{Claim 1}}$ \end{proof}

Note that $X_s=\Spec(\F_p[x_1,...,x_m]/(\overline{f}_1,...,\overline{f}_s)  )$, where $\overline{f}_i(x_1,...,x_m)=f_i(0,x_1,...,x_m)$. Finally, we have the following:\\
\textbf{Claim 2:} We can effectively check whether $X_s$ is smooth at $x$.
\begin{proof}
This can be done by using the Jacobian criterion for smoothness (Theorem 4.2.19 \cite{Liu}). More precisely, one needs to check if $\text{rank}(J_x)=m- \dim_x X_s$, where $J_x:=(\frac{\partial \overline{f}_i}{\partial x_j}(x))_{1\leq i \leq s, 1 \leq j \leq m}$. To this end, we note that:
\begin{enumerate}
\item The number $\text{rank}(J_x)$ is computable by Gauss elimination.

\item The number $\dim_x X_s$ is also computable. One needs to compute the irreducible decomposition $X_s=\bigcup_{i=1}^n X_i$ using Fact \ref{seidenfact} and calculate $\max_{x\in X_i} \dim X_i$ using Fact \ref{seidenfact'}.
\end{enumerate} 
\qedhere $_{\textit{Claim 2}}$ \end{proof}
 Claim 2 finishes the proof.
\end{proof}
\subsubsection{General case} \label{gencase}
We now sketch the case where $X_0$ is a general affine scheme over $\F_p[t]$. This follows again by induction on $\dim(X_0)$. If $\dim(X_0)=0$, then $X_0$ cannot possibly have any $\F_p[\![t]\!]$-integral points. For the inductive step, we first view $X_0$ as a scheme of finite type over $\F_p$ via $X_0 \to \Spec(\F_p[t])\to \Spec(\F_p)$. As in the proof of Corollary \ref{corint}, we may assume that $X_0$ is reduced, by computing \textit{effectively} $(X_0)_{\text{red}}$ (see Fact \ref{seidenfact} and Remark \ref{seidenrem}) and replacing $X_0$ with $(X_0)_{\text{red}}$. 
We may therefore take $X_0$ to reduced and $W$ a \textit{non-empty} Zariski open subset of the generic fiber $(X_0)_{\F_p(t)}$. We note that the case $W=\emptyset$ can again be checked effectively: If $W^c=V(J)$ and $(X_0)_{\F_p(t)}=V(I)$, it suffices to calculate $\sqrt{J}$ (Fact \ref{seidenfact}) and check if $I=J$ (see Fact \ref{seidenfact2}). In that case, the output of our algorithm is that there does not exist $P\in X_0(\F_p[\![t]\!])$ with $P_{\eta}\in W(\F_p(\!(t)\!) )$. 

Now Conjecture \ref{conjecture1} provides us with a blow-up morphism $Y_0\to X_0$ in a nowhere dense center $Z\subsetneq X_0$ with $Y_0$ regular. The center of such a blow-up can be calculated by brute-force, as explained in Remark 4.1 \cite{Den}. The affine charts of $Y_0$ can be computed explicitly from the blow-up ideal. Using Corollary \ref{corint} for each affine piece separately, one may then check if $Y=Y_0\times_{\Spec(\F_p[t])} \Spec(\F_p[\![t]\!])$ has an $\F_p[\![t]\!]$-integral point. Moreover, if $Y$ has an $\F_p[\![t]\!]$-integral point, its underlying rational point is also a regular point of $Y$, using that the regular locus $\text{Reg}(Y)$ is open in $Y$. Arguing as in the proof of Theorem 4.3 \cite{Den}, we see that there are two scenarios: 
\begin{enumerate}
\item Either $Y(\F_p[\![t]\!])\neq \emptyset$, which implies the existence of $P\in X(\F_p[\![t]\!])$ with $P_{\eta} \in W(\F_p(\!(t)\!) )$ (using Theorem 2.4 \cite{Den}) or

\item The problem is reduced to the lower-dimensional scheme $Z$, which can be solved by our induction hypothesis.
\end{enumerate}

\begin{rem} \label{comparwithds}
$(a)$ In Theorem 4.3 \cite{Den}, Denef-Schoutens desingularize the \textit{generic} fiber of $X$. As a consequence, the scheme $Y$ produced in the proof of Theorem 4.3 \cite{Den} has regular generic fiber but $Y$ need not be regular at points $y\in Y_s$. On the other hand, by desingularizing $X_0$, our scheme $Y_0$ (and hence $Y$) is regular at all $y\in Y_s$. This simplifies the task of deciding whether $Y(\F_p[\![t]\!])=\emptyset$, because of Corollary \ref{corint}.\\ 
$(b)$ The state of the art in desingularization theory allows the present method to work unconditionally for $\dim (X_0)=3$, whereas the method of \cite{Den} works for $\dim (X_0)=4$. This is an unfortunate consequence of desingularizing $X_0$ instead of the generic fiber of $X$.
\end{rem}

\begin{rem} \label{lastrem}
$(a)$ One defect of this method, besides the one mentioned in Remark \ref{comparwithds}$(b)$, is that it does not provide us with an \textit{unconditional} proof of the decidability of the \textit{positive} existential theory of $\F_p[\![t]\!]$ in $L_t$. \\
$(b)$ One advantage of this method is that once an \textit{effective} desingularization algorithm is known in characteristic $p$, the above proof can be converted into an actually effective (not merely theoretically terminating) algorithm. On the other hand, the effective Greenberg approximation theorem used by Denef-Schoutens (see Theorem 3.1, Remark 3.3 \cite{Den}) is only effective in theory, as it ultimately relies on the brute-force algorithm explained in the proof of Theorem 6.1 \cite{vddbeck}. 
\end{rem}

\appendix
\section{Computational algebraic geometry} \label{appalg}
\subsection{Affine schemes as algorithmic inputs} \label{app1} \label{schemesasinputs}
\begin{definition}
Let $R$ be a Noetherian and \textit{computable} ring, i.e. one whose underlying set is (or may be indetified with) a recursive subset of $\N$, so that the ring operations are (or are identified with) recursive functions (e.g., $R=\F_p[t]$ or $R=\F_p(t)$ or $R=\F_p$). 
\end{definition}
Ideals $I\subseteq R[x_1,...,x_n]$ are construed as algorithmic inputs via some natural identification $R[x_1,...,x_n]\simeq \N$. For example, one can enumerate all finite sequences of elements from $R[x_1,...,x_n]$, which encode (non-faithfully) all ideals of $R[x_1,...,x_n]$ by Noetherianity.
\begin{definition} \label{givenschemeconve}
$(a)$  By saying that we are \textit{explicitly given} (or simply given) an ideal $I\subseteq R[x_1,...,x_n]$, we mean that we are given a finite set of generators via the above identification.\\
$(b)$ By saying that we are \textit{explicitly given} (or simply given) a closed subscheme $X\subseteq \mathbb{A}_R^n$, we mean that $X=\Spec(A)$ with $A=R[x_1,...,x_n]/I(X)$ and the ideal $I(X)\subseteq R[x_1,...,x_n]$ is given. 
\end{definition}

%
\subsection{Some algorithms in algebraic geometry} \label{app2}
We recall some standard facts from computational algebraic geometry. We stress that in order to effectively compute primary decomposition of ideals (and the computation of the associated primes), the choice of the base field $k$ is delicate. Merely assuming that $k$ is computable is not enough. This is explained in detail in the introduction of \cite{Seidencon}. One needs to stipulate the following two conditions below:
\begin{definition}
Let $k$ be a computable field.\\
$(a)$ We say that $k$ satisfies condition (F) if one can effectively compute the factorization of any given polynomial in $k[x]$. \\
$(b)$ We say that $k$ satisfies condition (P) if for any given linear homogeneous system $\sum_{1\leq i,j\leq n} a_{ij} X_j=0$, one can effectively check if there exists a solution in $k^p$, and if it does to find one.
\end{definition}
\begin{fact}  \label{seidenfact}
Let $k$ be a computable field satisfying conditions (F) and (P). There is an algorithm to compute the primary decomposition of any given ideal $I\subseteq  k[x_1,...,x_n]$ as well as the associated primes.
\end{fact}
\begin{proof}
See 36, pg. 290 \cite{Seidencon} and 42, pg. 291 \cite{Seidencon}. 
\end{proof}
In \S \ref{Denef-Schoutensec}, we apply Fact \ref{seidenfact} only in the cases of $k=\F_p$ and $k=\F_p(t)$. We note that $\F_p$ clearly satisfies (F) and (P), while for $\F_p(t)$ it follows from 41 \cite{Seidencon}.
\begin{rem} \label{seidenrem}
By the usual dictionary between algebra and geometry, Fact \ref{seidenfact} says that we can compute the irreducible components of a given closed subscheme $X\subseteq \mathbb{A}_{k}^n$. Being able to compute the associated primes of the primary decomposition of a given ideal $I\subseteq  k[x_1,...,x_n]$, in particular allows us to compute the reduced underlying scheme of any given closed subscheme $X\subseteq \mathbb{A}_{k}^n$. 
\end{rem}
The next two facts are standard applications of Gr\"obner bases (see \cite{cox}).
\begin{fact} \label{seidenfact'}
Let $k$ be any computable field. There is an algorithm to compute the dimension of any given affine variety $V\subseteq \mathbb{A}_k^n$
\end{fact}  
\begin{proof}
One may perform computations over $\overline{k}$. Now see Chapter 9 \S 3 \cite{cox}, especially Theorem 8 and the discussion after that.
\end{proof}

\begin{fact} \label{seidenfact2}
Let $k$ be a computable field. There is an algorithm to check if $f\in I$, for any given $f\in k[x_1,...,x_n]$ and $I\subseteq k[x_1,...,x_n]$. In particular, there is an algorithm to check if $I=J$ for any two given ideals $I,J \subseteq  k[x_1,...,x_n]$.
\end{fact}
\begin{proof}
One may check this over $\overline{k}$ and use Chapter 2, \S 8 \cite{cox}.
\end{proof}
%


\section*{Acknowledgements}
I am indebted to the anonymous referee for several comments and corrections which have drastically improved the quality of this paper. I would like to thank E. Hrushovski and J. Koenigsmann for invaluable guidance. I also thank M. Temkin for answering my questions related to resolution of singularities and F.-V. Kuhlmann for instructive feedback.
\bibliographystyle{alpha}
\bibliography{references3}

\begin{thebibliography}{BDLvdD79}

\bibitem[Abh56]{Abh2}
Shreeram Abhyankar.
\newblock Two notes on formal power series.
\newblock {\em Proceedings of the American Mathematical Society Vol. 7, No. 5,
  pp. 903-905}, 1956.

\bibitem[Abh66]{Abh}
Shreeram Abhyankar.
\newblock {\em Resolution of singularities of embedded algebraic surfaces}.
\newblock Academic Press, New York, 1966.

\bibitem[ADF22]{AnsDittFehm}
Sylvy Anscombe, Philip Dittmann, and Arno Fehm.
\newblock Axiomatizing the existential theory of $\mathbb{F}_q(\!(t)\!)$.
\newblock {\em \tt{arXiv: 2205.05438[math.LO]}}, 2022.

\bibitem[AF16]{AnscombeFehm}
Sylvy Anscombe and Arno Fehm.
\newblock The existential theory of equicharacteristic henselian valued fields.
\newblock {\em Algebra \& Number Theory Volume 10, Number 3 (2016), 665-683.},
  2016.

\bibitem[AJ19]{AJ}
Sylvy Anscombe and Franziska Jahnke.
\newblock The model theory of \textsc{C}ohen rings.
\newblock {\em \tt{arXiv::1904.08297 [math.LO]}}, (2019).

\bibitem[AK65]{AK}
James Ax and Simon Kochen.
\newblock Diophantine problems over local fields \textsc{II}.
\newblock {\em Amer. J. Math. 87}, 1965.

\bibitem[Bas78]{Bas2}
Serban~A. Basarab.
\newblock Some model theory for henselian valued fields.
\newblock {\em Journal of Algebra}, 1978.

\bibitem[Bas91]{Bas}
Serban~A. Basarab.
\newblock Relative elimination of quantifiers for henselian valued fields.
\newblock {\em Annals of Pure and Applied Logic 53}, (1991).

\bibitem[BDLvdD79]{vddbeck}
J.~Becker, J.~Denef, L.~Lipshitz, and L.~van~den Dries.
\newblock Ultraproducts and approximation in local rings \textsc{I}.
\newblock {\em lnventiones math. 51,189-203}, 1979.

\bibitem[B{\'e}l99]{Bel}
Luc B{\'e}lair.
\newblock Types dans les corps valu\'es munis d'applications coefficients.
\newblock {\em Illinois J. Math. 43(2):410- 425}, 1999.

\bibitem[BLR90]{Bosch}
S.~Bosch, W.~L{\"u}tkebohmert, and M.~Raynaud.
\newblock {\em N\'eron Models}.
\newblock Ergeb. Math. Grenz., 21, Springer, New York-Heidelberg-Berlin, 1990.

\bibitem[Bou06]{Bourbaki}
Nicolas Bourbaki.
\newblock {\em Alg\`ebre Commutative. Chapitres 5 \`a 7.}
\newblock Springer, 2006.

\bibitem[CF67]{CF}
J.W.S Cassels and A.~Fr\"ohlich.
\newblock {\em Algebraic Number Theory}.
\newblock Proceedings of an Instructional Conference Organized by the London
  Mathematical Society, 1967.

\bibitem[CLO07]{cox}
David Cox, John Little, and Donal O'Shea.
\newblock {\em Ideals, Varieties, and Algorithms: An Introduction to
  Computational Algebraic Geometry and Commutative Algebra}.
\newblock Undergraduate Texts in Mathematics, Springer-Verlag, New York, Third
  Edition, 2007.

\bibitem[Con16]{Con}
Brian Conrad.
\newblock From normal crossings to strict normal crossings.
\newblock {\em available at
  \url{http://math.stanford.edu/~conrad/249BW17Page/handouts/crossings.pdf}},
  (2016).

\bibitem[CP08]{CosPil}
Vincent Cossart and Olivier Piltant.
\newblock Resolution of singularities of threefolds in positive characteristic.
  \textsc{I}.: \textsc{R}eduction to local uniformization on artin-schreier and
  purely inseparable coverings.
\newblock {\em Journal of Algebra Volume 320, Issue 3, Pages 1051-1082}, 2008.

\bibitem[CP09]{CosPil2}
Vincent Cossart and Olivier Piltant.
\newblock Resolution of singularities of threefolds in positive characteristic.
  \textsc{II}.
\newblock {\em Journal of Algebra Volume 321, Issue 7, Pages 1836-1976}, 2009.

\bibitem[CP19]{CosPil3}
Vincent Cossart and Olivier Piltant.
\newblock Resolution of singularities of arithmetical threefolds.
\newblock {\em Journal of Algebra Volume 529, 1 July 2019, Pages 268-535},
  2019.

\bibitem[Cut09]{Cut}
Steven~Dale Cutkosky.
\newblock Resolution of singularities for 3-folds in positive characteristic.
\newblock {\em American J. of Math. 131, 59-127.}, 2009.

\bibitem[Den16]{Den2}
Jan Denef.
\newblock Geometric proofs of \textsc{A}x-\textsc{K}ochen and \textsc{E}rshov.
\newblock {\em American Journal of Mathematics 138(1):181-199}, (2016).

\bibitem[dJ96]{DJ}
Aise~Johan de~Jong.
\newblock Smoothness, semi-stability and alterations.
\newblock {\em Publications math\'ematiques de l\'I.H.\'E.S., tome 83, p.
  51-93}, (1996).

\bibitem[DS03]{Den}
Jan Denef and Hans Schoutens.
\newblock On the decidability of the existential theory of $\mathbb{F}_p[[t]]$.
\newblock {\em Valuation theory and its applications Vol. 2, 43-60}, (2003).

\bibitem[End72]{Endler}
Otto Endler.
\newblock {\em Valuation theory}.
\newblock Berlin ; New York: Springer-Verlag, 1972.

\bibitem[EP05]{engprest}
Antonio~J. Engler and Alexander Prestel.
\newblock {\em Valued Fields}.
\newblock Springer, Berlin, Heidelberg, 2005.

\bibitem[Ers65]{Ershov}
Ju.L. Ershov.
\newblock On elementary theories of local fields.
\newblock {\em Algebra i Logika 4, No. 2, 5-30}, 1965.

\bibitem[Fle11]{Flenner}
Joseph Flenner.
\newblock Relative decidability and definability in henselian valued fields.
\newblock {\em J. Symbolic Logic 76(4): 1240-1260}, 2011.

\bibitem[Gro65]{EGA}
Alexander Grothendieck.
\newblock {\em \'El\'ements de g\'eom\'etrie alg\'ebrique : IV. \'Etude locale
  des sch\'emas et des morphismes de sch\'emas, Seconde partie}.
\newblock Publications math\'ematiques de l' I.H.\'E.S., tome 24, p. 5-231,
  1965.

\bibitem[Hir64]{Hir}
Heisuke Hironaka.
\newblock Resolution of singularities of an algebraic variety over a field of
  characteristic zero: I.
\newblock {\em Annals of Mathematics, Second Series, Vol. 79, No. 1}, 1964.

\bibitem[Kar20]{KK}
Konstantinos Kartas.
\newblock Decidability via the tilting correspondence.
\newblock {\em \tt{arXiv: 2001.04424 [math.LO]}}, (2020).

\bibitem[Kar22]{ThesisKartas}
Konstantinos Kartas.
\newblock {\em Contributions to the model theory of henselian fields}.
\newblock PhD thesis, University of Oxford, 2022.

\bibitem[Koc74]{Kochen}
Simon Kochen.
\newblock The model theory of local fields.
\newblock {\em ISILC Logic Conference (Proc. Internat. Summer Inst. and Logic
  Colloq., Kiel, 1974), pp. 384-425. Lecture Notes in Math., Vol. 499,
  Springer, Berlin, 1975.}, 1974.

\bibitem[Kra56]{Krasner}
Marc Krasner.
\newblock Approximation des corps valu\'es complets de caract\'eristique $p$
  par ceux de caract\'eristique 0.
\newblock {\em Colloque d'Algebre Superleure Bruxelles}, 1956.

\bibitem[Kuh94]{KuhlQE}
Franz-Viktor Kuhlmann.
\newblock Quantifier elimination for henselian fields relative to additive and
  multiplicative congruences.
\newblock {\em Israel Journal of Mathematics volume 85, pages 277-306}, 1994.

\bibitem[Kuh10]{KuhlmannStab}
Franz-Viktor Kuhlmann.
\newblock Elimination of ramification {I}: the generalized stability theorem.
\newblock {\em Trans. Amer. Math. Soc.}, 362(11):5697--5727, 2010.

\bibitem[Kuh11]{defect}
Franz-Viktor Kuhlmann.
\newblock The \textsc{D}efect.
\newblock {\em Commutative algebra - Noetherian and non-Noetherian
  perspectives, 277-318, Springer, New York}, 2011.

\bibitem[Kuh16]{Kuhl2}
Franz-Viktor Kuhlmann.
\newblock The algebra and model theory of tame valued fields.
\newblock {\em Journal f\"ur die reine und angewandte Mathematik}, 2016.

\bibitem[Lan94]{Lang}
Serge Lang.
\newblock {\em Algebraic Number Theory}.
\newblock Springer, Graduate Texts in Mathematics book series (GTM, volume
  110), 1994.

\bibitem[Lee20]{Junguk}
Junguk Lee.
\newblock Hyperfields, truncated \textsc{DVR}s, and valued fields.
\newblock {\em Journal of Number Theory, Volume 212, July 2020, Pages 40-71},
  2020.

\bibitem[Liu06]{Liu}
Qing Liu.
\newblock Algebraic geometry and arithmetic curves.
\newblock {\em Oxford University Press}, (2006).

\bibitem[LL21]{Lee}
Junguk Lee and Wan Lee.
\newblock On the structure of certain valued fields.
\newblock {\em Annals of Pure and Applied Logic Volume 172, Issue 4}, 2021.

\bibitem[Mac86]{Mac}
Angus Macintyre.
\newblock Twenty years of $p$-adic model theory.
\newblock {\em Logic Colloquium '84 Elsevier Science Publishers B. K
  (North-Holland)}, 1986.

\bibitem[Nic13]{Nic}
Johannes Nicaise.
\newblock Geometric criteria for tame ramification.
\newblock {\em Mathematische Zeitschrift volume 273, pages 839-868}, 2013.

\bibitem[RZ60]{Rob}
Abraham Robinson and Elias Zakon.
\newblock Elementary properties of ordered abelian groups.
\newblock {\em Transactions of the American Mathematical Society}, 1960.

\bibitem[Sca99]{Scanlon}
Thomas Scanlon.
\newblock Quantifier elimination for the relative \textsc{F}robenius.
\newblock {\em Valuation Theory and Its Applications Volume \textsc{II},
  Conference proceedings of the International Conference on Valuation Theory
  (Saskatoon)}, 1999.

\bibitem[Sei74]{Seidencon}
A.~Seidenberg.
\newblock Constructions in algebra.
\newblock {\em Transactions of the American Mathematical Society},
  197:273--313, 1974.

\bibitem[Ser80]{Ser}
Jean-Pierre Serre.
\newblock {\em Local Fields}.
\newblock New York: Springer-Verlag, 1980.

\bibitem[Spi20]{Spivak}
Mark Spivakovsky.
\newblock {\em Resolution of Singularities: An Introduction}.
\newblock Chapter 3 of the book: Handbook of Geometry and Topology of
  Singularities \textsc{I}, Springer International Publishing, 2020.

\bibitem[{Sta}]{sp}
The {Stacks Project Authors}.
\newblock \textit{Stacks Project}.
\newblock \url{https://stacks.math.columbia.edu}.

\bibitem[Tem08]{Tem2}
Michael Temkin.
\newblock Desingularization of quasi-excellent schemes in characteristic zero.
\newblock {\em Advances in Mathematics 219, pp. 488-522}, 2008.

\bibitem[Tem11]{Tem}
Michael Temkin.
\newblock Absolute desingularization in characteristic zero.
\newblock {\em Motivic integration and its interactions with model theory and
  non-archimedean geometry, Volume \textsc{II} London Math. Soc. Lecture Note
  Ser. 384 213-250}, 2011.

\bibitem[vdD12]{vdd}
Lou van~den Dries.
\newblock {\em Lectures on the Model Theory of Valued Fields}.
\newblock Chapter 5 in the book "Model Theory in Algebra, Analysis and
  Arithmetic", Cetraro, Italy, 2012.

\bibitem[Wei90]{weisp}
V.~Weispfenning.
\newblock Existential equivalence of ordered abelian groups with parameters.
\newblock {\em Arch. Math. Logic}, 29(4):237--248, 1990.

\bibitem[Zie72]{Ziegler}
Martin Ziegler.
\newblock {\em Die elementare Theorie der henselschen K\"orper}.
\newblock Dissertation thesis, 1972.

\end{thebibliography}
\Addresses
\end{document}